\newcommand{\INT}[2]{\int\limits_{#1}^{#2} } 
\renewcommand{\epsilon}{\varepsilon}
\newcommand{\eps}{\epsilon}
\newcommand{\POI}[1]{\mathrm{Poi}\left(#1\right)}
\renewcommand{\theta}{\vartheta}
\renewcommand{\phi}{\varphi}
\renewcommand{\leq}{\leqslant}
\renewcommand{\geq}{\geqslant}
\newcommand{\D}{\ \mathrm{d}}
\newcommand{\R}{\mathbb{R}}
\newcommand{\CN}{\mathbb{C}} 
\newcommand{\I}{\mathrm{i}}
\newcommand{\1}{\mathbbm{1}}
\newcommand{\ind}{\mathbbm{1}} 
\renewcommand{\d}{\mathrm{d}}
\newcommand{\eee}{{\rm e}}
\newcommand{\e}{\mathrm{e}}
\newcommand{\BP}[2]{\P{#1\mid #2}}
\newcommand{\EXP}[1]{\exp\left(#1\right)}
\newcommand{\bD}{\mathbb{D}}
\DeclareMathOperator{\VAR}{Var}
\DeclareMathOperator{\RE}{Re}
\renewcommand{\l}{\left}
\renewcommand{\r}{\right}
\newcommand{\BINOMIALVERT}[2]{\mathrm{Bin}(#1,#2)}
\newcommand{\NBINOMIALVERT}[2]{\mathrm{NBin}(#1,#2)}
\newcommand{\todistr}{\overset{\rm d}{\underset{n\to\infty}\longrightarrow}}
\newcommand{\toweak}{\overset{\rm w}{\underset{n\to\infty}\longrightarrow}}
\newcommand{\toweaknon}{\overset{\rm w}{\underset{}\longrightarrow}}
\newcommand{\ton}{\overset{}{\underset{n\to\infty}\longrightarrow}}
\DeclareMathOperator{\argmax}{argmax}
\theoremstyle{plain}
\newtheorem{theorem}{Theorem}[section]
\newtheorem{lemma}[theorem]{Lemma}
\newtheorem{corollary}[theorem]{Corollary}
\theoremstyle{definition}
\theoremstyle{remark}
\newtheorem{remark}[theorem]{Remark}
\renewcommand{\P}[1]{\mathbb{P}\left[#1\right]} 
\newcommand{\N}{\mathbb{N}}
\newcommand{\E}{\mathbb{E}}
\newcommand{\IND}[1]{\1_{#1}}
\newcommand{\EW}[1]{\E\left[#1\right]}
\DeclareMathOperator{\IM}{Im}
\DeclareMathOperator{\COV}{Cov}
\newcommand{\Z}{\mathbb{Z}}
\newcommand{\FAF}{\textrm{FAF}}
\newcommand{\HAF}{\textrm{HAF}}
\newcommand{\SP}{\textrm{SP}}
\newcommand{\WP}{\textrm{WP}}
\begin{document}

\author{Hendrik Flasche}
\address{Institut f\"ur Mathematische Stochastik,
Westf\"alische Wilhelms-Universit\"at M\"unster,
Orl\'eans--Ring 10,
48149 M\"unster, Germany}
\email{hendrik.flasche@web.de}

\author{Zakhar Kabluchko}
\address{Institut f\"ur Mathematische Stochastik,
Westf\"alische Wilhelms-Universit\"at M\"unster,
Orl\'eans--Ring 10,
48149 M\"unster, Germany}
\email{zakhar.kabluchko@uni-muenster.de}

\title[Real Zeros of Random Analytic Functions]{Real zeroes of random analytic functions associated with geometries of constant curvature}

\keywords{Random polynomials, random analytic functions, spherical polynomials, flat analytic function, hyperbolic analytic function, Weyl polynomials, real zeroes, weak convergence, Gaussian processes, functional limit theorem}

\subjclass[2010]{Primary: 30C15, 26C10; secondary: 60F99, 60F17, 60F05, 60G15}

\begin{abstract}
Let $\xi_0,\xi_1,\ldots$ be i.i.d.\ random variables with zero mean and unit variance. We study the following four families of random analytic functions:
$$
P_n(z)
:=
\begin{cases}
\sum_{k=0}^n \sqrt{\binom nk} \xi_k z^k &\text{ (spherical polynomials)},\\
\sum_{k=0}^\infty \sqrt{\frac{n^k}{k!}} \xi_k z^k &\text{ (flat random analytic function)},\\
\sum_{k=0}^\infty \sqrt{\binom {n+k-1} k} \xi_k z^k &\text{ (hyperbolic random analytic functions)},\\
\sum_{k=0}^n \sqrt{\frac{n^k}{k!}} \xi_k z^k &\text{ (Weyl polynomials)}.
\end{cases}
$$
We compute explicitly the limiting mean density of real zeroes of these random functions. More precisely, we provide a formula for  $\lim_{n\to\infty} n^{-1/2}\mathbb E N_n[a,b]$, where $N_n[a,b]$ is the number of zeroes of $P_n$ in the interval $[a,b]$.
\end{abstract}

\maketitle

\section{Introduction and statement of results}
\subsection{Introduction}
Let $\xi_0,\xi_1,\ldots$ be independent, identically distributed  (i.i.d.)\ random variables with real values. Consider random polynomials of the form
\begin{equation*}
Q_n(z) = \sum_{k=0}^n \xi_k z^k, \quad z\in\CN.
\end{equation*}
The study of real zeroes of random polynomials began with the works of~\citet{bloch_polya} and~\citet{littlewood_offord1,littlewood_offord2,littlewood_offord3}.  Assuming that the $\xi_k$'s are standard normal, Kac~\cite{kac_asympt} derived an explicit formula for the expected number of real zeroes of $Q_n$  and proved that this number is asymptotically equivalent to $\frac 2 \pi \log n$, as $n\to\infty$. Ibragimov and Maslova~\cite{ibragimov_maslova1} proved that the same asymptotics continues to hold if the $\xi_k$'s have zero mean and finite second moment. Further results on the number of real zeroes, including an asymptotic formula for the variance and a central limit theorem, were obtained in the subsequent works by Ibragimov and Maslova~\cite{ibragimov_maslova2,maslova_variance,maslova_distribution}. For more recent results on the number of real roots, see~\cite{do_nguyen_vu_repulsion,nguyen_nguyen_vu,do_nguyen_vu_arbitrary}.

The asymptotic distribution of complex zeroes of $Q_n$ is also well-understood. Roughly speaking, most complex zeroes cluster near the unit circle and their arguments have approximately uniform distribution, as $n\to\infty$. More precisely, if we assign to each complex zero of $Q_n$ the weight $1/n$, then by a result of Ibragimov and Zaporozhets~\cite{ibragimov_zaporozhets}, the probability that the resulting random probability measure converges weakly to the uniform distribution on the unit circle equals $1$ if and only if $\E \log(1+|\xi_0|)<\infty$.

\subsection{Four families of random analytic functions}\label{subsec:four_families}
Along with polynomials whose coefficients are i.i.d.\ random variables as above, several other ensembles of random polynomials (or, more
generally, random analytic functions) appeared in the literature. These random analytic functions  have the form
\begin{equation}\label{eq:defrandompolynomial}
P_n(z):=\sum_{k=0}^\infty f_{n,k} \xi_kz^k,
\end{equation}
where $z\in\CN$ is a complex variable,  $(f_{n,k})_{n\in\N, k\in\N_0}$ are real deterministic coefficients to be specified below, and  $\xi_0,\xi_1,\ldots$ are i.i.d.\ real-valued random variables.
The following special cases of~\eqref{eq:defrandompolynomial} proved to be especially interesting:
\begin{equation}\label{eq:thefourcases}
f_{n,k}=
\begin{cases}
\sqrt{\binom{n}{k}}\IND{\{k\leq n\}}, & \textrm{binomial, elliptic, or spherical polynomials (\SP),} \\
\sqrt{\frac{n^k}{k!}}, & \textrm{flat random analytic function (\FAF),} \\
\sqrt{\binom{n+k-1}{k}}, & \textrm{hyperbolic random analytic function (\HAF),} \\
\sqrt{\frac{n^k}{k!}}\IND{\{k\leq n\}}, & \textrm{Weyl polynomials (\WP).}
\end{cases}
\end{equation}
The first three ensembles appeared in the theoretical physics literature as quantum chaotic eigenstates~\cite{bogomolny1,bogomolny2,hannay1,hannay2,forrester_honner,bleher_ridzal}, in the computational complexity literature~\cite{shub_smale}, see also~\cite{kostlan,peres_book,sodin_tsirelson,schehr_majumdar},  and are referred to as the \textit{elliptic} (or $SU(2)$), \textit{flat} (or $ISO(2)$), and \textit{hyperbolic} (or $SU(1,1)$) ensembles, respectively.  If the coefficients $\xi_k$ have the complex Gaussian distribution, the zero sets of these ensembles are invariant with respect to the isometries of the elliptic (or spherical)/flat/hyperbolic two-dimensional geometry~\cite{sodin_tsirelson,peres_book}.

Regarding the complex zeroes of these random analytic functions in the case of non-Gaussian coefficients, the following is known~\cite{kabluchko_zaporozhets}. Let $\mathcal{M}(\CN)$ be the space of locally finite measures on $\CN$ endowed with the vague topology.  Let $\mu_n$ be the point process of complex zeroes of $P_n$, that is $\mu_n$ is a random element of $\mathcal{M}(\CN)$ given by
$$
\mu_n :=  \sum_{z\in \CN\colon P_n(z)=0}\delta_z,
$$
where $\delta_z$ denotes the unit point mass at $z$. It was shown in~\cite{kabluchko_zaporozhets} that under the moment assumption $\E \log (1+|\xi_0|)<\infty$, the sequence $\frac 1n \mu_n$ converges weakly in probability on $\mathcal{M}(\CN)$ to the deterministic measure with Lebesgue density
$$
\rho(z) =
\begin{cases}
\pi^{-1}  (1+|z|^2)^{-2} &\textrm{in the \SP\ case, }\\
\pi^{-1} & \textrm{in the \FAF\ case, }\\
\pi^{-1} (1-|z|^2)^{-2}\ind_{\{|z| < 1\}} &\textrm{in the \HAF\ case, }\\
\pi^{-1}\ind_{\{|z| < 1\}}  & \textrm{in the \WP\ case}.
\end{cases}
$$
The aim of the present work is to study the distribution of \textit{real zeroes} of $P_n$ in the above four cases. Explicit formulae for the mean density of real zeroes seem to be available only in the case when the $\xi_k$'s have Gaussian distribution; see~\cite{edelman_kostlan} for an elegant approach based on integral geometry. For example, in the Gaussian case, the mean density of real zeroes of the spherical polynomial $P_n$ at $t\in\R$ is \textit{exactly} $\sqrt n\, (\pi (1+t^2))^{-1}$. In the non-Gaussian case, it is natural to ask about the asymptotic behavior as $n\to\infty$.

\subsection{Assumption on the variance}
Before stating the main result we need to introduce notation which allows to treat all four cases in a unified way.
In the rest of the present paper, $\xi_0,\xi_1,\ldots$ are real-valued i.i.d.\ random variables satisfying
\begin{equation}\label{eq:xi_0_assumpt}
\E \xi_0=0,  \quad \E[\xi_0^2]=1.
\end{equation}

In the \SP\ and \WP\ cases, $P_n(z)$ is a random polynomial defined for all $z\in\CN$. To determine the radius of convergence in the remaining two cases, observe that for every $\eps>0$, with probability $1$ we have $\xi_n = O(\e^{\eps n})$ as $n\to\infty$ by the Borel--Cantelli lemma and Markov inequality.
It follows that in the \HAF\ case, $P_n(z)$ is a random analytic function defined on the open unit disk $\bD = \{|z|<1\}$,
while in the \FAF\ case,  $P_n(z)$ is a random analytic function defined on the entire complex plane, with probability $1$.
In the following, we restrict $z$ to the respective domain on which $P_n$ is defined.

Consider the quantity
\begin{equation}\label{eq:defvn}
v_n(z):=\EW{P^2_n(z)}=\sum_{k=0}^\infty f_{n,k}^2 z^{2k}.
\end{equation}
If  $z$ is real, this is the variance of $P_n(z)$.
In the four cases listed above, $v_n$ is explicitly given by
\begin{equation}\label{eq:vn_explicit_fourcases}
v_n(z)=\begin{cases}
(1+z^2)^n &\textrm{in the \SP\ case, } z\in\CN, \\
\EXP{nz^2} & \textrm{in the \FAF\ case, } z\in\CN,\\
(1-z^2)^{-n} &\textrm{in the \HAF\ case, } |z|<1, \\
\sum_{k=0}^n  \frac{ (nz^{2})^k}{k!} = \frac{\e^{nz^2} \Gamma(n+1, nz^2)}{\Gamma(n+1)} & \textrm{in the \WP\ case, } z\in\CN,
\end{cases}
\end{equation}
where $\Gamma(s,x) = \int_x^\infty \eee^{-t} t^{s-1} \D t$ is the incomplete Gamma function.
All four families of random polynomials fulfill a condition that is sufficient for proving almost everything what follows.
Namely, there exists an open, connected set
$\mathcal{D}\subset \CN$ and an
analytic function $p:\mathcal{D}\to\CN$ such that
\begin{equation}\label{eq:annahmevarmodphi}
\lim_{n\to\infty} \frac{v_n(z)}{\e^{np(z)}}=1
\end{equation}
uniformly on every compact subset of $\mathcal D$.
The function $p$ turns out to determine the limiting mean density  of real zeroes
of $P_n$.
We choose $\mathcal{D}$ and $p:\mathcal{D}\to\CN$ as follows:
\begin{equation}\label{eq:p_corresponding_function}
p(z)=\begin{cases}
\log(1+z^2) & \textrm{on }\mathcal{D}=\CN\backslash\{\I t: -1 < t < 1\} \textrm{ in the \SP\ case,}\\
z^2 & \textrm{on }\mathcal{D}=\CN \textrm{ in the \FAF\ case,}\\
-\log(1-z^2)&  \textrm{on }\mathcal{D}=\mathbb{D} \textrm{ in the \HAF\ case,}\\
z^2 & \textrm{on }\mathcal{D}=\mathbb{D} \cap \{z\in\CN \colon |\e^{-z^2} z^2| < 1/\e\} \textrm{ in the \WP\ case.}
\end{cases}
\end{equation}
The fact that~\eqref{eq:annahmevarmodphi} holds with this choice of $p$ is trivial in the \SP, \FAF\ and \HAF\ cases (where~\eqref{eq:annahmevarmodphi} becomes equality), whereas in the \WP\ case it follows from Proposition 3.1 of~\cite{pritsker_varga}. Note that in the \WP\ case, $\mathcal{D}$ contains the interval $(-1,1)$ in which the real zeroes  will be studied.


\subsection{Main result}
The main result of the present paper identifies the limiting mean density function of real zeroes for  the four families
given in \eqref{eq:thefourcases}.
\begin{theorem}\label{theorem:maintheorem}
Let $\xi_0,\xi_1,\ldots$ be i.i.d.\ real-valued random variables satisfying~\eqref{eq:xi_0_assumpt}.
Let $P_n$ be one of the four random analytic functions defined in~\eqref{eq:defrandompolynomial} and~\eqref{eq:thefourcases}, and
let the corresponding $p:\mathcal{D}\to\CN$ be given by~\eqref{eq:p_corresponding_function}.
If  $N_n[a,b]$ denotes the number of real zeroes of $P_n$ in the interval $[a,b]\subset\mathcal{D}\cap (\R\backslash\{0\})$, then
\begin{equation*}
\lim_{n\to\infty}\frac{\E N_n[a,b]}{\sqrt{n}}=\frac{1}{2\pi}\INT{a}{b}
\sqrt{\frac{p'(t)}{t}+p''(t)} \D t.
\end{equation*}
\end{theorem}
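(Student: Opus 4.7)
The plan is to reduce the theorem to the Kac-Rice formula for Gaussian processes plus a universality step for non-Gaussian coefficients. First I would settle the Gaussian case $\xi_k\sim\mathcal{N}(0,1)$. Here $P_n$ is a centered Gaussian process with covariance
$$
K_n(s,t) = \E[P_n(s)P_n(t)] = \sum_{k\geq 0} f_{n,k}^2 (st)^k,
$$
which depends only on the product $st$. Writing $K_n(s,t)=V_n(st)$ with $V_n(w):=\sum_k f_{n,k}^2 w^k$, one has $v_n(t)=V_n(t^2)$, so the chain rule yields
$$
\partial_s\partial_t \log K_n(s,t)\Big|_{s=t} = \frac{1}{4}\Bigl(\frac{(\log v_n)'(t)}{t}+(\log v_n)''(t)\Bigr).
$$
The classical Kac-Rice intensity $\rho_n(t)=\pi^{-1}\sqrt{\partial_s\partial_t\log K_n(s,t)|_{s=t}}$ then simplifies to
$$
\rho_n(t)=\frac{1}{2\pi}\sqrt{\frac{(\log v_n)'(t)}{t}+(\log v_n)''(t)}.
$$

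Next I would plug in the asymptotics of~\eqref{eq:annahmevarmodphi}. Uniform convergence of $v_n(z)/\e^{np(z)}$ to $1$ on compact subsets of the complex domain $\mathcal{D}$, combined with Cauchy's integral formula for derivatives of holomorphic functions, gives $n^{-1}(\log v_n)'(t)\to p'(t)$ and $n^{-1}(\log v_n)''(t)\to p''(t)$ uniformly on every compact $[a,b]\subset \mathcal{D}\cap(\R\setminus\{0\})$. Since $[a,b]$ avoids the origin, dividing by $t$ is harmless, so
$$
\frac{\rho_n(t)}{\sqrt n}\longrightarrow \frac{1}{2\pi}\sqrt{\frac{p'(t)}{t}+p''(t)}
$$
uniformly on $[a,b]$, and dominated convergence gives the theorem in the Gaussian case.

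For general $\xi_k$ obeying~\eqref{eq:xi_0_assumpt}, the strategy is a local universality argument. Rescaling around each $t_0\in[a,b]$, the process $\tilde P_n(s):=P_n(t_0+s/\sqrt n)/\sqrt{v_n(t_0)}$ has covariance that converges, again via analyticity of~\eqref{eq:annahmevarmodphi}, to a nondegenerate Gaussian limit; a multivariate Lyapunov central limit theorem applied to $(\tilde P_n(s),\tilde P_n'(s))$ then shows that the corresponding finite-dimensional distributions are asymptotically Gaussian. The Kac-Rice representation
$$
\E N_n[a,b] = \int_a^b \int_{\R} |y|\, f_{(P_n(t),P_n'(t))}(0,y)\, dy\, dt
$$
links the expected zero count to the joint density of $(P_n(t),P_n'(t))$ at the origin, which one wishes to replace by its Gaussian surrogate. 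The main obstacle sits here: since~\eqref{eq:xi_0_assumpt} alone gives no density for $\xi_0$, the weak CLT must be upgraded to a \emph{local} limit theorem with uniform bounds on $f_{(P_n(t),P_n'(t))}(0,y)$ in $n$ and $t$. This is typically achieved through characteristic-function estimates in the spirit of Ibragimov--Maslova, exploiting the large number of summands of comparable variance in~\eqref{eq:defrandompolynomial} (for each model the variance profile $f_{n,k}^2 t^{2k}$ is concentrated on a window of size $\asymp\sqrt n$ around a dominant index). Once such uniform density control is in place, one can interchange limit and integration to reduce the non-Gaussian expected count to its Gaussian counterpart, completing the proof.
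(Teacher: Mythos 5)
Your Gaussian-case computation is correct: with $K_n(s,t)=V_n(st)$ the Edelman--Kostlan intensity is indeed $\frac{1}{2\pi}\sqrt{(\log v_n)'(t)/t+(\log v_n)''(t)}$, the derivative asymptotics follow from~\eqref{eq:annahmevarmodphi} via Cauchy estimates on a complex neighbourhood of $[a,b]$, and the resulting limit matches the paper's $\gamma(t)$. The problem is the general case under~\eqref{eq:xi_0_assumpt} alone, which is the actual content of the theorem, and there your argument has a genuine gap at its central step. The representation $\E N_n[a,b]=\int_a^b\int_{\R}|y|\,f_{(P_n(t),P_n'(t))}(0,y)\,\d y\,\d t$ presupposes that $(P_n(t),P_n'(t))$ has a joint density; for fixed $n$ this fails outright when $\xi_0$ is discrete (e.g.\ Rademacher, exactly the case in the paper's figure), since $P_n(t)$ is then a discrete random variable and the density $f_{(P_n(t),P_n'(t))}$ simply does not exist, so there is nothing for a local limit theorem to bound uniformly. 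To salvage the route one must first smooth (add a small absolutely continuous perturbation, as Ibragimov and Maslova do) and then control what smoothing does to the zero count; at that point the real difficulty appears, namely uniform-in-$t$ anti-concentration for $P_n(t)/\sqrt{v_n(t)}$ and a bound strong enough to justify exchanging the $n\to\infty$ limit with the expectation and the $t$-integral. You defer exactly this to ``characteristic-function estimates in the spirit of Ibragimov--Maslova,'' but under a bare second moment assumption this is the heart of the proof, not a routine step, and nothing in your outline supplies it.

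For comparison, the paper avoids any Kac--Rice formula for $P_n$ itself. It proves a functional CLT in windows of width $\delta n^{-1/2}$: the rescaled process $Q_{n,t}$ converges weakly in the disk algebra to the stationary Gaussian process $Z_{\gamma(t)}$ (Theorem~\ref{theo:wkonvgauss}), which by the continuous mapping theorem gives distributional convergence of the window zero counts; the Rice formula is invoked only for the Gaussian limit process (Lemma~\ref{lemma:exprootsofstatgaussianprocessunitinterval}). The bulk of the work (Sections~\ref{sec:unif_integr}--\ref{sec:small_values}) is then devoted to uniform integrability of the window counts, proved via a Jensen-formula bound on the total number of zeroes, an Ibragimov--Maslova-type derivative lemma, and small-ball estimates for $Q_{n,t}(0)$ obtained from characteristic functions, local limit theorems and the Kolmogorov--Rogozin inequality. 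These estimates are precisely the analogue of what your proposal is missing; without them the interchange of limit and integration in your final step is unjustified.
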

\noindent
More specifically, using the explicit expression for $p$ given in~\eqref{eq:p_corresponding_function}, we obtain the following
\begin{corollary} For $[a,b]\subseteq \R\backslash\{0\}$ we have
\begin{align*}
\lim_{n\to\infty}\frac{\E N_n[a,b]}{\sqrt{n}}
=
\begin{cases}
\frac{1}{\pi} \INT{a}{b} \frac 1{1+t^2}\D t & \textrm{in the \SP\ case,} \\
\frac{b-a}{\pi} & \textrm{in the \FAF\ case.}
\end{cases}
\end{align*}
Furthermore, for $[a,b]\subset (-1,1)\backslash\{0\}$ we have
\begin{align*}
\lim_{n\to\infty}\frac{\E N_n[a,b]}{\sqrt{n}}
=
\begin{cases}
\frac{1}{\pi} \INT{a}{b} \frac 1 {1-t^2}\D t & \textrm{in the \HAF\ case,} \\
\frac{b-a}{\pi} & \textrm{in the \WP\ case.}
\end{cases}
\end{align*}
\end{corollary}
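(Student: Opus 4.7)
The plan is to derive the corollary as a direct specialization of Theorem \ref{theorem:maintheorem}: in each of the four cases I substitute the explicit function $p$ from \eqref{eq:p_corresponding_function} into the integrand
\[
\frac{1}{2\pi}\sqrt{\frac{p'(t)}{t}+p''(t)}
\]
and simplify. The key observation that makes every case collapse is that the expression under the square root is in all four cases a perfect square of an elementary rational/logarithmic derivative, so the square root can be taken in closed form and the resulting integrand is either a constant or of the form $c/(1\pm t^2)$.

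Concretely, the computations I carry out are as follows. In the \SP\ case, $p'(t) = 2t/(1+t^2)$ and $p''(t) = (2-2t^2)/(1+t^2)^2$, and combining these over a common denominator yields $p'(t)/t + p''(t) = 4/(1+t^2)^2$, whose square root is $2/(1+t^2)$; dividing by $2\pi$ gives the claimed density $1/(\pi(1+t^2))$. In the \HAF\ case the computation is entirely analogous with signs reversed: $p'(t) = 2t/(1-t^2)$ and $p''(t) = (2+2t^2)/(1-t^2)^2$ combine to give $4/(1-t^2)^2$ under the root, producing the density $1/(\pi(1-t^2))$ on $(-1,1)\setminus\{0\}$. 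In the \FAF\ and \WP\ cases $p(t) = t^2$, so $p'(t)/t + p''(t) = 2 + 2 = 4$, whose square root is $2$, yielding the constant density $1/\pi$ and hence length $(b-a)/\pi$.

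Since Theorem \ref{theorem:maintheorem} already provides the general formula and guarantees that the hypothesis $[a,b]\subset \mathcal{D}\cap(\R\setminus\{0\})$ is satisfied for the stated intervals (recall that in the \WP\ case the domain $\mathcal{D}$ contains $(-1,1)$ by the remark after \eqref{eq:p_corresponding_function}, and in the \SP\ and \FAF\ cases $\mathcal{D}\cap\R = \R$), no additional analytic input is needed. There is essentially no obstacle here beyond verifying the algebraic identity that $p'(t)/t + p''(t)$ is a perfect square in each case; this is what makes the four cases exceptional and the limit formula so clean.
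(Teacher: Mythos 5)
Your proof is correct and is exactly what the paper intends: the corollary is an immediate specialization of Theorem~\ref{theorem:maintheorem}, and the algebra you carry out (combining $p'(t)/t$ and $p''(t)$ over a common denominator and observing that the result is a perfect square) is the right computation in each of the four cases. The paper states the corollary without a separate proof, implicitly relying on precisely this substitution.
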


\subsection{Method of proof}
In order to study the zeroes of $P_n$ in an interval $[a,b]$, we fix some $\delta\in (0,1/2)$ and divide the interval $[a,b]$ into pieces of length $\delta n^{-1/2}$. In Theorem~\ref{theo:wkonvgauss} we shall prove that on each such piece $[t,t+\delta n^{-1/2}]$, the appropriately rescaled stochastic process $P_n$ converges to certain stationary Gaussian process denoted by $Z_{\gamma(t)}$; see Figure~\ref{fig:weyl}. From this we derive the distributional convergence of the number of zeroes of $P_n$ in an interval of length $\delta n^{-1/2}$ to the number of zeroes of the stationary Gaussian process in an interval of length $\delta$. The most technically challenging part of the proof is to turn the distributional convergence into convergence of the corresponding expectations. To this end, we show in Section~\ref{sec:unif_integr} that the number of zeroes of $P_n$ in an interval of length $\delta n^{-1/2}$ is a uniformly integrable family of random variables.

\subsection{Comments}

Under the assumption of finite $(2+\delta)$-th moment, Theorem~\ref{theorem:maintheorem}  was established by~\citet{tao_vu} who used the replacement method coming from random matrix theory. Our method is completely different. Besides relaxing the moment assumption, our method may have some other advantages. For example, it can be applied in the setting when the $\xi_k$'s belong to the domain of attraction of a stable law, or to study real zeroes of other random analytic functions.  Similar approach has already been applied to real zeroes of random trigonometric polynomials in one and many variables~\cite{flasche,angst_poly_viet}, as well as to random Taylor series~\cite{flaschekabluchko2017}. It should be possible to treat random linear combinations of orthogonal polynomials by similar methods, thus proving the universality of the limiting mean density function computed in~\cite{lubinsky1,lubinsky2} in the case of Gaussian coefficients.

We believe that the second moment assumption on $\xi_0$ is nearly optimal. In fact, it could be replaced by the slightly more general assumption that $\xi_0$ belongs to the domain of attraction of the normal law. The proofs would remain almost the same except that a slowly varying factor would appear. To keep the notation simple, we decided to restrict ourselves to the case when $\E [\xi_0^2]<\infty$. On the other hand, we conjecture that coefficients from a stable domain of attraction should lead to a different limiting mean density function.
Presumably, Theorem~\ref{theorem:maintheorem} holds even if the interval $[a,b]$ contains $0$, but our method does not allow to prove this because the proof of Lemma~\ref{lemma:abschaetzungdmeins} breaks down at $t=0$.

\begin{figure}[th]
\includegraphics[height=0.3\textwidth, width=0.99\textwidth]{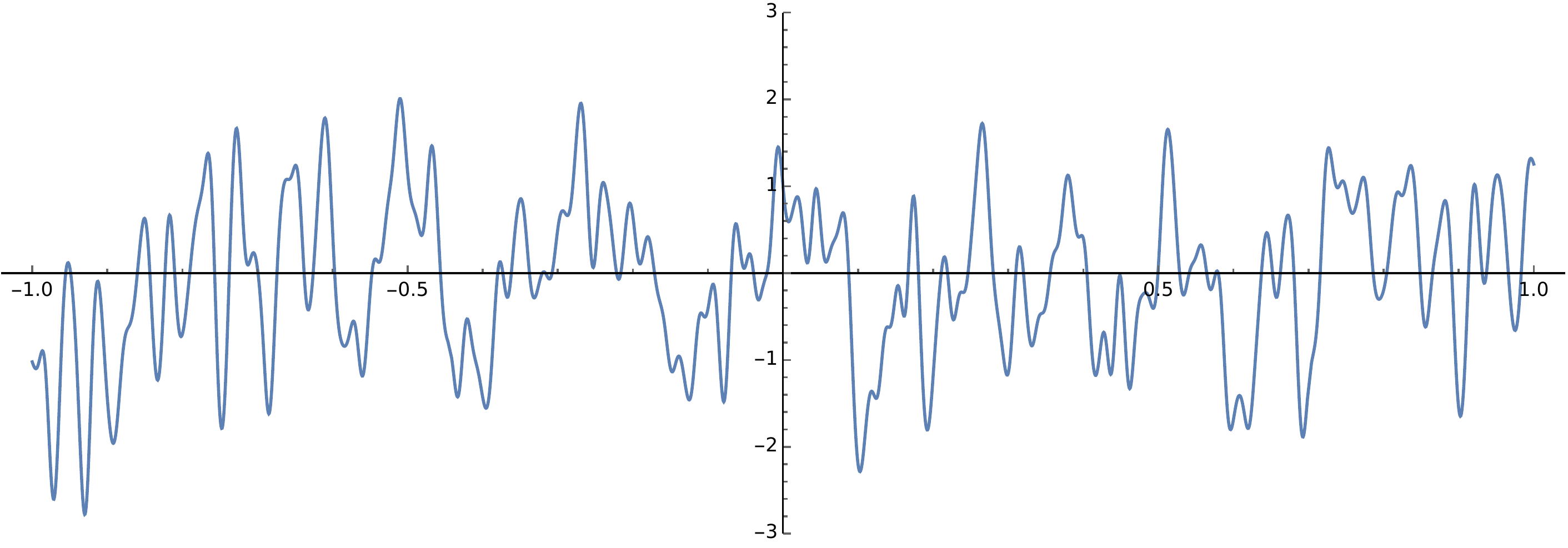}
\caption
{
The normalized Weyl polynomial $P_n(z)/ \sqrt{v_n(z)}$ for $z\in (-1,1)$. Here, the degree is $n=10^4$ and $\P{\xi_0=\pm 1}=1/2$. In Theorem~\ref{theo:wkonvgauss}, this process will be approximated (in windows of size $\text{const} \cdot n^{-1/2}$) by a stationary Gaussian process.}\label{fig:weyl}
\end{figure}

\subsection{Notation}
In the following, $C>0$ (respectively, $c>0$) denotes a sufficiently large  (respectively, small) constant that does not depend on $n$ and may change from line to line. Usually, our statements hold for sufficiently large $n\geq n_0$ only, where the number $n_0$ also changes from line to line.   The floor and the ceiling functions of $x$ are denoted by $\lfloor x \rfloor$ and $\lceil x \rceil$, respectively.

\section{Weak convergence to a stationary Gaussian process in a small window}
\noindent In this section we prove that in a suitable metric space, the appropriately rescaled sequence $(P_n)_{n\in\N}$ converges
weakly  to a stationary Gaussian process.

\subsection{Metric space of analytic functions}
The open and the closed disks of radius $R>0$ will be denoted by
$$
\bD_R=\{z\in\CN\colon |z|< R\}
\quad\text{and}\quad
\overline{\bD}_R=\{z\in\CN\colon |z|\leq R\},
$$
respectively.  Let $A(\bD_R)$ be the space of all functions $f: \overline{\bD}_R \to \CN$ that are continuous on $\overline{\bD}_R$ and analytic on the open disk $\bD_R$. Endowed with the uniform norm, $A(\bD_R)$ becomes a Banach space and even a Banach algebra (the disk algebra).
Let $A_{\text{real}}(\bD_R)$ be the closed subset of $A(\bD_R)$ consisting of those functions which take real values on $\overline{\bD}_R \cap \R$.

\subsection{The limit process}
For $\gamma >0$ let $(Z_{\gamma}(t))_{t\in\R}$ be a stationary, real-valued  Gaussian process with zero mean and covariance function
\begin{equation}\label{eq:covariancelimitfunction}
\COV\left[{Z_{\gamma}(s),Z_{\gamma}(t)}\right]=\EXP{- \frac{\gamma}{2}(t-s)^2}, \quad t,s\in\R.
\end{equation}
This process can be extended to a random analytic function on the entire complex plane. The probably simplest way to do this is to  define
\begin{equation}\label{eq:limitfunction}
Z_{\gamma}(u):= \eee^{-\gamma u^2/2} \sum_{k=0}^\infty \zeta_k \frac{\gamma^{k/2}u^k}{\sqrt{k!}},
\quad u\in\CN,
\end{equation}
where $\zeta_1,\zeta_2,\ldots$ are i.i.d.\ real standard Gaussian random variables. Using that $|\zeta_k| = O(\sqrt{\log k})$, $k\to\infty$, a.s., it is easy to check that the right-hand side of~\eqref{eq:limitfunction} converges uniformly on compact sets and hence defines an analytic function of $u$, with probability $1$. Then, one easily checks that~\eqref{eq:covariancelimitfunction} holds. Moreover, for all $t,s\in\CN$ we have
\begin{equation}\label{eq:cov_Z_gamma}
\E\left[{Z_{\gamma}(s)Z_{\gamma}(t)}\right]=\eee^{-\frac{\gamma}{2}(t-s)^2},
\quad
\E\left[{Z_{\gamma}(s)\overline{Z_{\gamma}(t)}}\right]=\eee^{-\frac{\gamma}{2}(\overline{t}-s)^2}.
\end{equation}
The expected number of real zeroes of this stationary Gaussian process is recorded in the next lemma.
\begin{lemma}\label{lemma:exprootsofstatgaussianprocessunitinterval}
Let $N^{(\gamma)}_{\infty}[a,b]$ be the number of real zeroes of $Z_{\gamma}$ in the interval $[a,b]\subset \R$.  Then,
\begin{equation*}
\E N_\infty^{(\gamma)}[a,b] = \frac{(b-a)}{\pi} \sqrt{\gamma}.
\end{equation*}
\end{lemma}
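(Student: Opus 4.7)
The plan is to apply the Kac--Rice formula for the expected number of zeros of a smooth Gaussian process on the real line. Since $Z_{\gamma}$ is constructed in~\eqref{eq:limitfunction} as an almost surely analytic function of $u\in\CN$ (hence a.s.\ real-analytic on $\R$), it has in particular $C^{1}$ sample paths, its level $0$ is almost surely non-degenerate, and the Kac--Rice formula applies without any extra regularization.

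The next step is to compute the two ingredients of the Kac--Rice density. By stationarity and the formula~\eqref{eq:covariancelimitfunction}, the covariance is $r(\tau):=\E[Z_{\gamma}(0)Z_{\gamma}(\tau)]=\eee^{-\gamma\tau^{2}/2}$, so $r(0)=1$ and $r''(0)=-\gamma$. Hence, for every fixed $t\in\R$,
\begin{equation*}
\VAR(Z_{\gamma}(t))=1,\qquad \VAR(Z_{\gamma}'(t))=-r''(0)=\gamma,\qquad \COV(Z_{\gamma}(t),Z_{\gamma}'(t))=r'(0)=0 .
\end{equation*}
Being jointly Gaussian and uncorrelated, $Z_{\gamma}(t)$ and $Z_{\gamma}'(t)$ are independent. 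The Kac--Rice formula then yields the density of real zeros at $t$,
\begin{equation*}
\rho(t)=\phi_{Z_{\gamma}(t)}(0)\,\E|Z_{\gamma}'(t)|=\frac{1}{\sqrt{2\pi}}\cdot\sqrt{\frac{2\gamma}{\pi}}=\frac{\sqrt{\gamma}}{\pi},
\end{equation*}
where I used that $|Z_{\gamma}'(t)|$ has a half-normal distribution with scale $\sqrt{\gamma}$. Integrating over $[a,b]$ gives $\E N_{\infty}^{(\gamma)}[a,b]=(b-a)\sqrt{\gamma}/\pi$ as claimed.

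An equivalent, perhaps slicker route is to exploit the scaling $\tilde Z(t):=Z_{\gamma}(t/\sqrt{\gamma})$, which by~\eqref{eq:covariancelimitfunction} has covariance $\eee^{-(t-s)^{2}/2}$ and therefore reduces everything to the case $\gamma=1$; the factor $\sqrt{\gamma}$ then appears as the Jacobian of the change of variables on the zero set, and one only needs the classical Kac--Rice identity $\E N_{\infty}^{(1)}[a,b]=(b-a)/\pi$ for the standard normalized covariance. I do not expect any real obstacle here: all sample-path regularity and non-degeneracy hypotheses of Kac--Rice are built into the explicit series representation~\eqref{eq:limitfunction}, and the computation of $r''(0)$ is elementary. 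The only point that one should check carefully is that $Z_{\gamma}$ has no multiple real zeros almost surely, which follows from the non-degeneracy of the joint law of $(Z_{\gamma}(t),Z_{\gamma}'(t))$ established above together with Bulinskaya's lemma.
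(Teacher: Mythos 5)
Your proof is correct and follows essentially the same route as the paper: the paper also invokes the Rice formula for stationary Gaussian processes, reading off $-r''(0)=\gamma$ from the expansion $\eee^{-\gamma t^2/2}=1-\tfrac12\gamma t^2+o(t^2)$ and citing Cram\'er--Leadbetter/Aza\"is--Wschebor, while you simply carry out the Kac--Rice density computation explicitly. No gap; the explicit evaluation $\rho(t)=\frac{1}{\sqrt{2\pi}}\sqrt{2\gamma/\pi}=\sqrt{\gamma}/\pi$ is exactly what the cited formula yields.
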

\begin{proof}
The covariance function of $Z_{\gamma}$ satisfies
$$
\COV\left[{Z_{\gamma}(0),Z_{\gamma}(t)}\right]=\eee^{-\frac 12 \gamma t^2} = 1- \frac12 \gamma t^2 + o(t^2) \quad \text{ as } t\to 0,
$$
and the claim follows from the Rice formula; see for example \cite[Chapter 10]{cramer2013stationary} or~\cite[Eq.~(3.2) on p.~71]{azais_wschebor_book}).
\end{proof}

\subsection{Weak convergence of the random analytic function}
Let $P_n$ be a random analytic function from one of the four ensembles introduced in Section~\ref{subsec:four_families}. Fix $\delta >0$. We shall show that in a small window of size $\delta n^{-1/2}$ around some point $t\in \mathcal{D}\cap (\R\backslash\{0\})$ the process $P_n$ looks, upon a proper rescaling and as $n\to\infty$, like the stationary Gaussian process introduced in~\eqref{eq:limitfunction}.

Take some $t\in \mathcal{D}\cap (\R\backslash\{0\})$ and consider the process $Q_{n,t}$ given by
\begin{equation}\label{eq:definitionQ}
Q_{n,t}(z):=\left(v_n\left(t+\frac{z}{\sqrt{n}}\right)\right)^{-1/2} P_n\left(t+\frac{z}{\sqrt{n}}\right).
\end{equation}
If we fix some radius $R>0$, then for all sufficiently large $n$, $Q_{n,t}(z)$  is a well-defined random analytic function on $\overline{\bD}_R$ and we may consider it as a random element of $A_{\rm{real}}(\bD_R)$.
Let us now state the functional limit theorem.
\begin{theorem}\label{theo:wkonvgauss}
Fix some $t\in \mathcal{D}\cap (\R\backslash \{0\})$.
Then, for all $R>0$,
\begin{equation*}
Q_{n,t} \toweak Z_{\gamma(t)} \quad \textrm{on } A_{\rm{real}}(\bD_R),
\end{equation*}
where $\toweaknon$ denotes weak convergence and $\gamma(t)>0$ is explicitly given by
\begin{equation}\label{eq:gamma}
\gamma(t)=\frac{1}{4}\left(\frac{p'(t)}{t}+p''(t)\right).
\end{equation}
\end{theorem}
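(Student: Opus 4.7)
The plan is to prove weak convergence on $A_{\rm{real}}(\bD_R)$ by the standard two-step argument: (i) convergence of all finite-dimensional distributions (f.d.d.) of $Q_{n,t}$ to those of $Z_{\gamma(t)}$, and (ii) tightness of the laws of $Q_{n,t}$ in $A_{\rm{real}}(\bD_R)$. Step (i) will be handled by a triangular-array CLT plus the Cram\'er--Wold device; step (ii) by a uniform second-moment bound combined with the Montel/Cauchy compactness criterion in $A(\bD_R)$.

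For step (i), expand
$$
Q_{n,t}(z) = \sum_{k\ge 0} c_{n,k}(z) \xi_k, \qquad c_{n,k}(z) := \frac{f_{n,k}(t+z/\sqrt n)^k}{(v_n(t+z/\sqrt n))^{1/2}},
$$
using a consistent analytic branch of $(v_n)^{-1/2}$ on $\{|z|\le R\}$; this is legitimate for large $n$ because~\eqref{eq:annahmevarmodphi} implies $v_n(t+z/\sqrt n)$ stays close to the nonvanishing $\eee^{np(t+z/\sqrt n)}$. Any real linear combination $S_n = \sum_j \alpha_j\RE Q_{n,t}(z_j) + \beta_j \IM Q_{n,t}(z_j)$ is of the form $\sum_k d_{n,k}\xi_k$, a sum of i.i.d.\ centered real random variables. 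Its variance $\sum_k d_{n,k}^2$ is a real-linear combination of evaluations of the covariance kernel $K_n(z,w) := \sum_k f_{n,k}^2 z^k w^k$, which equals $(1+zw)^n$, $\eee^{nzw}$, $(1-zw)^{-n}$, or $\sum_{k\le n}(nzw)^k/k!$, and which admits in each case the asymptotic $K_n(z,w)\sim\eee^{ng(z,w)}$ with $g(z,z) = p(z)$ (exact in the \SP/\FAF/\HAF\ cases and from Pritsker--Varga in the \WP\ case). Taylor expanding
$$
n\bigl[g(t+z/\sqrt n, t+w/\sqrt n) - \tfrac 12 p(t+z/\sqrt n) - \tfrac 12 p(t+w/\sqrt n)\bigr]
$$
around $(t,t)$, the $O(\sqrt n)$-order terms cancel by the symmetry $g(z,z)=p(z)$ (equivalently $g_z(t,t)=g_w(t,t)=p'(t)/2$), and the quadratic piece collapses to $-\tfrac 12 g_{zw}(t,t)(z-w)^2$ after using $2g_{zz}(t,t)+2g_{zw}(t,t) = p''(t)$. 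A short direct computation then identifies $g_{zw}(t,t)$ with $\tfrac 14(p'(t)/t + p''(t)) = \gamma(t)$, so $\E[Q_{n,t}(z) Q_{n,t}(w)] \to \eee^{-\gamma(t)(z-w)^2/2}$, matching~\eqref{eq:cov_Z_gamma}; the conjugate covariance is handled via $\overline{Q_{n,t}(w)} = Q_{n,t}(\bar w)$. The Lindeberg condition $\max_k d_{n,k}^2/\sum_k d_{n,k}^2 \to 0$ reduces to the observation that in each of the four ensembles, $f_{n,k}^2 t^{2k}/v_n(t)$ is a discrete probability law concentrated near $k\asymp n$ with maximal mass of order $n^{-1/2}$ (Stirling / saddle-point). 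Cram\'er--Wold then yields convergence of all f.d.d.

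For step (ii), fix $R' > R$. By Cauchy's integral formula, any subset of $A(\bD_{R'})$ of functions bounded by $M$ on $\overline\bD_{R'}$ is equicontinuous on $\overline\bD_R$ and hence relatively compact in $A(\bD_R)$ by Arzel\`a--Ascoli; via Markov it suffices to prove $\sup_n \E\sup_{|z|\le R'} |Q_{n,t}(z)|^2 < \infty$. Subharmonicity of $|Q_{n,t}|^2$ replaces the supremum by an average over disks of small fixed radius $\eta$, so by Fubini the task reduces to a uniform bound on $\E|Q_{n,t}(w)|^2 = v_n(|t+w/\sqrt n|)/|v_n(t+w/\sqrt n)|$ on $|w| \le R'+\eta$ (using $\E|P_n(z)|^2 = v_n(|z|)$, valid because $f_{n,k}^2\ge 0$). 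A Taylor expansion analogous to step (i), based on~\eqref{eq:annahmevarmodphi}, gives
$$
\log\E|Q_{n,t}(w)|^2 = n\bigl[p(|t+w/\sqrt n|) - \RE p(t+w/\sqrt n)\bigr] + o(1) \longrightarrow 2\gamma(t)\IM(w)^2,
$$
which is bounded on $|w|\le R'+\eta$; tightness in the closed subspace $A_{\rm{real}}(\bD_R)\subset A(\bD_R)$ follows because $Q_{n,t}$ takes real values on the real axis.

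The main obstacle I foresee is making the kernel asymptotic $K_n(z,w) \sim \eee^{ng(z,w)}$ rigorous and uniform in $(z,w)$ on a disk of radius $R$ around $t$ in the \WP\ case, where $v_n$ and $K_n$ are only truncated series and $\mathcal D\subsetneq\CN$: one must combine a two-variable extension of the Pritsker--Varga estimate underlying~\eqref{eq:annahmevarmodphi} with the observation that $t+z/\sqrt n$ remains inside $\mathcal D$ for all sufficiently large $n$. A secondary care-point is the consistent choice of an analytic branch of $(v_n)^{-1/2}$ so that $Q_{n,t}$ is genuinely an $A_{\rm{real}}(\bD_R)$-valued random variable; this uses the real-coefficient symmetry $v_n(\bar z) = \overline{v_n(z)}$.
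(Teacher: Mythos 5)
Your overall architecture (triangular-array Lindeberg CLT for the finite-dimensional distributions plus a second-moment tightness bound) is the same as the paper's, and your covariance expansion, Lindeberg reduction and subharmonicity/Montel tightness argument are all workable. The genuine gap is the one you flag yourself: the two-variable kernel asymptotic $K_n(z,w)=\sum_k f_{n,k}^2 z^k w^k\sim \e^{n g(z,w)}$ in the \WP\ case, where $K_n$ is a truncated series and no off-diagonal estimate is available; you leave this as requiring ``a two-variable extension of the Pritsker--Varga estimate'', i.e.\ a new analytic input you do not supply, so step (i) is not complete for Weyl polynomials as written. The paper avoids the issue entirely: since $v_n(\zeta)=\sum_k f_{n,k}^2\zeta^{2k}$, one has the \emph{exact} identity $K_n(z,w)=v_n\bigl(\sqrt{zw}\bigr)$, whence
$$
\E\left[Q_{n,t}(z_i)Q_{n,t}(z_j)\right]
=\frac{v_n\Bigl(\sqrt{t^2+t\frac{z_i+z_j}{\sqrt n}+\frac{z_iz_j}{n}}\Bigr)}
{\sqrt{v_n\bigl(t+\frac{z_i}{\sqrt n}\bigr)\,v_n\bigl(t+\frac{z_j}{\sqrt n}\bigr)}},
$$
and all three occurrences of $v_n$ are evaluated at points converging to $t\in\mathcal D$, where the one-variable assumption~\eqref{eq:annahmevarmodphi} (already verified for \WP\ via Pritsker--Varga) applies uniformly. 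Your computation then goes through verbatim with $g(z,w)=p(\sqrt{zw})$, for which indeed $g_{zw}(t,t)=\frac14\bigl(p'(t)/t+p''(t)\bigr)=\gamma(t)$; so the gap is easily closed, but it does need closing, and no two-variable saddle-point analysis is required.

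Two smaller points. In the Lindeberg step the coefficients are $a_{n,k}(z_j)$ at the complex points $s_n=t+z_j/\sqrt n$, not at the real point $t$; you should first bound $|a_{n,k}(z_j)|^2\leq C\, f_{n,k}^2|s_n|^{2k}/v_n(|s_n|)$ (the ratio $v_n(|s_n|)/|v_n(s_n)|$ is bounded by your own covariance limit) and then apply the $O(n^{-1/2})$ maximal-mass bound to the law with parameter determined by $|s_n|$ rather than $t$; the paper does exactly this in Lemma~\ref{lemma:locallimittheorem}, using the Kolmogorov--Rogozin inequality, though your Stirling/local-limit route also works for these explicit binomial, Poisson, negative binomial and conditioned Poisson laws. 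Finally, your tightness argument via $\E\sup_{|z|\leq R'}|Q_{n,t}(z)|^2$, subharmonicity and Montel is correct but heavier than necessary: the paper cites the criterion of Shirai, by which the uniform pointwise bound $\E|Q_{n,t}(z)|^2\leq C$ on $\overline{\bD}_R$ already yields tightness in $A_{\rm real}(\bD_R)$.
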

\begin{remark}
The theorem breaks down if $t=0$. Indeed, we have $P_n(0) = f_{n,0} \xi_0$, so there can be no central limit theorem if $\xi_0$ is not Gaussian. For other random analytic functions, functional limit theorems of the above type appeared in~\cite{ledoan,bleher_di,iksanov2016,flaschekabluchko2017}.
\end{remark}

\begin{proof}[Proof of Theorem~\ref{theo:wkonvgauss}]
We can write $Q_{n,t}(z) = \sum_{k=0}^{\infty} a_{n,k}(z)\xi_k$, where $a_{n,0}(z), a_{n,1}(z),\ldots$ are analytic functions given by
\begin{equation}\label{eq:a_n_k_def}
a_{n,k}(z):=\frac{f_{n,k}
\left(t+\frac{z}{\sqrt{n}}\right)^k}{\sqrt{v_n(t+\frac{z}{\sqrt{n}})}}, \quad |z|\leq R.
\end{equation}

\vspace*{2mm}
\noindent
\textit{Convergence of the finite-dimensional distributions.} We need to show that for every $d\in\N$ and every $z_1,\ldots, z_d\in \overline{\bD}_R$,
\begin{equation}\label{eq:conv_finit_distr}
\begin{pmatrix}
\RE Q_{n,t}(z_1)\\
\IM Q_{n,t}(z_1) \\
\vdots \\
\RE Q_{n,t}(z_d) \\
\IM Q_{n,t}(z_d)
\end{pmatrix}
\todistr
\begin{pmatrix}
\RE Z_{\gamma(t)}(z_1)\\
\IM Z_{\gamma(t)}(z_1)\\
\vdots \\
\RE Z_{\gamma(t)}(z_d) \\
\IM Z_{\gamma(t)}(z_d)
\end{pmatrix}.
\end{equation}
The left-hand side can be represented as $\sum_{k=0}^{\infty}V_{n,k}$, where $V_{n,k}$ is the $\R^{2d}$-valued random vector defined by
\begin{equation*}
V_{n,k}:=
\begin{pmatrix}
	V_{n,k}(1) \\
	V_{n,k}(2) \\
	\vdots \\
	V_{n,k}(2d-1) \\
	V_{n,k}(2d)
	\end{pmatrix}
:=
\begin{pmatrix}
\RE a_{n,k}(z_1)\\
\IM a_{n,k}(z_1) \\
\vdots \\
\RE a_{n,k}(z_d) \\
\IM a_{n,k}(z_d)
\end{pmatrix}\xi_k.
\end{equation*}
We apply the multivariate version of the Lindeberg CLT stated in Proposition~6.2 of \cite{flaschekabluchko2017} to a triangular array whose $n$-th row consists of $V_{n,0}, V_{n,1},\ldots$.
To prove the convergence of the covariances it suffices to show that for all $z_i,z_j\in \overline{\bD}_R$,
\begin{align}\label{eq:cov_one}
\lim_{n\to\infty}\EW{Q_{n,t}(z_i) Q_{n,t}(z_j)}&=\EW{Z_{\gamma(t)}(z_i)
Z_{\gamma(t)}(z_j)},\\
\label{eq:cov_two}
\lim_{n\to\infty}\EW{Q_{n,t}(z_i) \overline{Q_{n,t}(z_j)}}&=\EW{Z_{\gamma(t)}(z_i)\overline{Z_{\gamma(t)}(z_j)}},
\end{align}
because the  covariance matrix of $\sum_{k=0}^{\infty} V_{n,k}$
can be expressed linearly in terms of \eqref{eq:cov_one} and \eqref{eq:cov_two}. For the first expectation
we have
\begin{align}
\EW{Q_{n,t}(z_i)Q_{n,t}(z_j)}
&=
\frac{\sum_{k=0}^\infty f_{n,k}^2
\left(t+\frac{z_i}{\sqrt{n}}\right)^k\left(t+\frac{z_j}{\sqrt{n}}\right)^k}
{\sqrt{v_n\left(t+\frac{z_i}{\sqrt{n}}\right)v_n\left(t+\frac{z_j}{\sqrt{n}}\right)}} \label{eq:E_Q_n_t_Q_n_t}\\
&=
\frac{
v_n\left(
\sqrt{t^2+t\frac{z_i+z_j}{\sqrt{n}}+\frac{z_iz_j}{n}}
\right)
}{
\sqrt{v_n\left(t+\frac{z_i}{\sqrt{n}}\right)v_n\left(t+\frac{z_j}{\sqrt{n}}\right)}
}
\sim \frac{\EXP{np\left(\sqrt{t^2+t\frac{z_i+z_j}{\sqrt{n}}+\frac{z_iz_j}{n}}\right)}}
{\EXP{\frac{n}{2}\left(p\left(t+\frac{z_i}{\sqrt{n}}\right)
+p\left(t+\frac{z_j}{\sqrt{n}}\right)\right)}} \notag
\end{align}
as $n\to\infty$, where we utilized~\eqref{eq:annahmevarmodphi} in the last step.
Furthermore, we have
\begin{equation*}
\sqrt{t^2+t\frac{z_i+z_j}{\sqrt{n}}+\frac{z_iz_j}{n}}
=
t+\frac{z_i+z_j}{2\sqrt{n}}-\frac{(z_i-z_j)^2}{8nt}+o\left(\frac{1}{n}\right),
\end{equation*}
as $n\to\infty$,
and since $p$ is supposed to be analytic,
\begin{equation}\label{eq:lemma4_asymptoticzwei}
p\left(\sqrt{t^2+t\frac{z_i+z_j}{\sqrt{n}}+\frac{z_iz_j}{n}}\right)
=
p(t)
+
p'(t)\left(\frac{z_i+z_j}{2\sqrt{n}}-\frac{(z_i-z_j)^2}{8nt}\right)
\\
+\frac{p''(t)}{2}\frac{(z_i+z_j)^2}{4n}+o\left(\frac{1}{n}\right).
\end{equation}
In addition, we have for $l=i,j$,
\begin{equation}\label{eq:lemma4_asymptoticdrei}
p\left(t+\frac{z_l}{\sqrt{n}}\right)
=p(t)+p'(t)\frac{z_l}{\sqrt{n}}+\frac{p''(t)}{2}\frac{z_l^2}{n}+
o\left(\frac{1}{n}\right).
\end{equation}
Taking~\eqref{eq:lemma4_asymptoticzwei} and~\eqref{eq:lemma4_asymptoticdrei} into account and using the identity $\overline{Q_{n,t}(z)}=Q_{n,t}(\overline{z})$,
we arrive at
\begin{align}
\label{eq:covariancestructureeins}
\lim_{n\to\infty}
\EW{Q_{n,t}(z_i)Q_{n,t}(z_j)}&=\EXP{-\frac 12\gamma(t)(z_i-z_j)^2}, \\
\label{eq:covariancestructurezwei}
\lim_{n\to\infty}
\EW{Q_{n,t}(z_i)\overline{Q_{n,t}(z_j)}}&=\EXP{-\frac 12 \gamma(t)(z_i-\overline{z}_j)^2},
\end{align}
where
\begin{equation}\label{eq:definitiongamma}
\gamma(t):=\frac{1}{4}\left(\frac{p'(t)}{t}+p''(t)\right).
\end{equation}
This proves~\eqref{eq:cov_one} and~\eqref{eq:cov_two} in view of the formula for the covariance function of $Z_{\gamma}$ given in~\eqref{eq:cov_Z_gamma}.

It remains to verify the Lindeberg condition, namely
\begin{equation}\label{eq:lindeberg}
\lim_{n\to\infty}\sum_{k=0}^\infty \EW{V_{n,k}^2(i)\IND{\{|V_{n,k}(i)|>\eps\}}}=0
\end{equation}
for all $i=1,\ldots,2d$ and $\eps>0$.
Define
\begin{equation*}
b_{n,k}:=\max_{i=1,\dots, d} |a_{n,k}(z_i)| \quad \textrm{and} \quad
\tilde{b}_n:=\max_{k=0,1,\ldots} b_{n,k}.
\end{equation*}
Then, $V_{n,k}^2(i)\leq b_{n,k}^2\xi_k^2$ for all $i=1,\ldots,2d$ and thus, for every $\eps>0$,
\begin{align*}
\sum_{k=0}^\infty \EW{V^2_{n,k}(i)\IND{\{|V_{n,k}(i)|>\eps\}}}
& \leq \sum_{k=0}^\infty b_{n,k}^2 \EW{\xi_k^2\IND{\{|\xi_k|>\eps/b_{n,k}\}}} \\
& \leq \EW{\xi_1^2\IND{\{|\xi_1|>\eps / \tilde{b}_{n}\}}}\sum_{k=0}^\infty b_{n,k}^2 \\
& \leq \EW{\xi_1^2\IND{\{|\xi_1|>\eps /\tilde{b}_{n}\}}}\sum_{k=0}^\infty \sum_{j=1}^d |a_{n,k}(z_j)|^2 \\
& \leq C \, \EW{\xi_1^2\IND{\{|\xi_1|>\eps / \tilde{b}_{n}\}}}.
\end{align*}
In the last step we used that for all $j=1,\dots,d$,
$$
\sum_{k=0}^\infty |a_{n,k}(z_j)|^2
=
\E [Q_{n,t}(z_j) \overline{Q_{n,t}(z_j)}]
\ton \EXP{2 \gamma(t)(\IM z_j)^2}, \quad \text{ as } n\to\infty,
$$
by~\eqref{eq:covariancestructurezwei}.
Finally,
\begin{equation*}
\lim_{n\to\infty} \EW{\xi_1^2\IND{\{|\xi_1|>\eps /\tilde{b}_{n}\}}}=0
\end{equation*}
because $\EW{\xi_1^2}=1<\infty$  and  $\lim_{n\to\infty}\tilde{b}_n=0$  by Lemma~\ref{lemma:locallimittheorem} whose statement and proof are postponed to Section~\ref{sec:auxiliary}.
This verifies the Lindeberg condition~\eqref{eq:lindeberg} and completes the proof of~\eqref{eq:conv_finit_distr}.

\vspace*{2mm}
\noindent\textit{Tightness.} To complete the proof of the theorem we need to show that the probability laws of $(Q_{n,t})_{n\in\N}$ form a tight sequence on $A_{\rm{real}}(\bD_R)$.
For random analytic functions, there are especially simple criteria of tightness.
Namely, by~\cite[Remark on p.~341]{shirai}, it suffices to show that $\E|Q_{n,t}(z)|^2 \leq  C$ for all $z\in \overline{\mathbb{D}}_R$ and all sufficiently large $n\in \N$. But~\eqref{eq:covariancestructurezwei} (which holds uniformly over $z_i, z_j\in \overline{\bD}_R$) yields
\begin{align} \label{eq:qvarabsch}
\E \left|Q_{n,t}(z)\right|^2=
\EW{Q_{n,t}(z)\overline{Q_{n,t}(z)}}
=\EXP{2\gamma(t)(\IM z)^2+o(1)}
<C,
\end{align}
thus completing the proof of Theorem~\ref{theo:wkonvgauss}.
\end{proof}

\subsection{Distributional convergence of the number of zeroes}
The next lemma transfers the weak convergence of the scaled random analytic functions
$Q_{n,t}$ on $A_{\rm{real}}(\bD_R)$  to the convergence in distribution of the corresponding random number of zeroes
in small windows of length $\delta/\sqrt{n}$.
\begin{lemma}\label{lemma:hauptschwachekonvergenz}
Fix some $\delta>0$ and $t\in \mathcal{D}\cap (\R\backslash \{0\})$. Then, the sequence of random variables
\begin{equation}\label{eq:seq_N_n}
\left(N_n \left[t, t+\frac{\delta}{\sqrt n}\right]\right)_{n\in\N}
\end{equation}
converges in distribution to the number of real zeroes of the Gaussian process $Z_{\gamma(t)}(\cdot)$ in the interval $[0,\delta]$.
\end{lemma}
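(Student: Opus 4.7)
The plan is to apply a continuous mapping argument to the zero-counting functional, after reducing the problem to the weak convergence $Q_{n,t}\toweak Z_{\gamma(t)}$ on a disk $\overline{\bD}_R$ with $R>\delta$ furnished by Theorem~\ref{theo:wkonvgauss}.

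First I would reduce to counting zeroes of $Q_{n,t}$. Since $v_n(s)=\sum_{k=0}^{\infty} f_{n,k}^2 s^{2k}>0$ for every real $s$ in the domain of convergence of the series, the identity $P_n(s) = v_n(s)^{1/2} Q_{n,t}(\sqrt n\,(s-t))$ implies that the real zeroes of $P_n$ in $[t, t+\delta/\sqrt n]$ are in bijection with the real zeroes of $Q_{n,t}$ in $[0,\delta]$. Fixing any $R>\delta$, Theorem~\ref{theo:wkonvgauss} together with the Skorokhod representation theorem allows me to pass to a coupling under which $Q_{n,t}\to Z_{\gamma(t)}$ uniformly on $\overline{\bD}_R$ almost surely, so it suffices to show that on this coupling the number of real zeroes of $Q_{n,t}$ in $[0,\delta]$ converges a.s.\ to that of $Z_{\gamma(t)}$.

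Next I would identify the continuity set of the zero-counting functional. Let $\mathcal G\subset A_{\rm{real}}(\bD_R)$ consist of those $f$ with $f(0)\neq 0$, $f(\delta)\neq 0$, and no multiple real zero in $[0,\delta]$. For $f\in\mathcal G$ every real zero in $[0,\delta]$ is a strict sign change, there are only finitely many such zeroes (since $f$ is real-analytic on $[0,\delta]$ and not identically zero), and $|f|$ is bounded below by a positive constant outside arbitrarily small neighbourhoods of them. The intermediate value theorem then shows that any $g\in A_{\rm{real}}(\bD_R)$ sufficiently close to $f$ in the uniform norm on $[0,\delta]$ has exactly the same number of real zeroes in $[0,\delta]$ as $f$; hence the zero-counting functional $N\colon A_{\rm{real}}(\bD_R)\to\N_0$ is continuous on $\mathcal G$.

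It remains to verify that $Z_{\gamma(t)}\in\mathcal G$ almost surely, which is the only nontrivial step and therefore the main obstacle. By~\eqref{eq:covariancelimitfunction} the variables $Z_{\gamma(t)}(0)$ and $Z_{\gamma(t)}(\delta)$ are centered Gaussians of variance one, so neither vanishes with positive probability. To exclude tangential zeroes I would invoke Bulinskaya's lemma: differentiating~\eqref{eq:covariancelimitfunction} shows that for every $s\in[0,\delta]$ the pair $(Z_{\gamma(t)}(s),Z'_{\gamma(t)}(s))$ is a non-degenerate centered bivariate Gaussian with covariance matrix $\mathrm{diag}(1,\gamma(t))$, while both sample paths extend to entire random analytic functions via~\eqref{eq:limitfunction} and are therefore almost surely Lipschitz on $[0,\delta]$. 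A covering and union-bound argument then yields that almost surely there is no $s\in[0,\delta]$ with $Z_{\gamma(t)}(s)=Z'_{\gamma(t)}(s)=0$. Once this is in place, the continuity of $N$ on $\mathcal G$ combined with the Skorokhod coupling delivers the desired distributional convergence.
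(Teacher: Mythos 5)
Your route is essentially the paper's: rewrite $N_n[t,t+\delta/\sqrt n]$ as the number of zeroes of $Q_{n,t}$ in $[0,\delta]$, invoke Theorem~\ref{theo:wkonvgauss} with $R>\delta$, apply a continuous-mapping argument on the set $\mathcal G$ of functions not vanishing at $0,\delta$ and having no multiple zero in $[0,\delta]$, and verify that $\mathcal G$ has full measure under the law of $Z_{\gamma(t)}$ via the nondegeneracy of $(Z_{\gamma(t)}(s),Z'_{\gamma(t)}(s))$ and Bulinskaya's lemma; the paper delegates exactly these two steps to Lemmas~4.1 and~4.3 of \cite{iksanov2016}, and whether one phrases the continuous-mapping step through a Skorokhod coupling or directly is immaterial. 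The one point where your justification does not hold as written is the continuity of the zero-counting functional on $\mathcal G$: uniform closeness of $g$ to $f$ on the real segment $[0,\delta]$ together with the intermediate value theorem only gives a lower bound on the number of zeroes of $g$; it does not prevent $g$ from having several zeroes inside a small neighbourhood of a simple zero of $f$ (a perturbation of the form $g(x)=f(x)+\eps\sin\l(c(x-x_0)/\eps\r)$ is uniformly small on the segment yet creates extra sign changes near $x_0$), so the functional is in fact not continuous for the topology of uniform convergence on $[0,\delta]$ alone. The repair is to use the topology you actually have, namely uniform convergence on the closed complex disk $\overline{\bD}_R$: around each (necessarily simple) real zero of $f$ choose a small complex disk on whose boundary $|f|$ is bounded below, apply Rouch\'e/Hurwitz to conclude that $g$ has exactly one zero there, and note that this zero must be real because $g$ is real-valued on $\R$ and changes sign. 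With this adjustment (which is precisely the content of Lemma~4.1 of \cite{iksanov2016} invoked in the paper), your argument is complete and coincides with the paper's proof.
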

\begin{proof}
By~\eqref{eq:definitionQ}, $N_n [t, t+ \delta/\sqrt n]$ is the number of real zeroes of $Q_{n,t}(\cdot)$ in the interval $[0,\delta]$. By Theorem~\ref{theo:wkonvgauss}, the latter process converges weakly to $Z_{\gamma(t)}(\cdot)$ on the space $A_{\rm{real}}(\bD_R)$, as $n\to\infty$.
We may take $R>\delta$, so that the interval $[0,\delta]$ is contained in the interior of the disk $\mathbb{D}_R$ of radius $R$.
To pass to the distributional convergence of real zeroes, we employ the continuous mapping theorem in the same way as it was done in~\cite{iksanov2016}. By Lemma~4.1 therein, the map which assigns to a function $f\in A_{\rm{real}}(\bD_R) \backslash\{0\}$ the  number of zeroes of $f$ in the interval $[0,\delta]$  is locally constant (hence, continuous) on the set of all analytic functions which do not vanish at $0,\delta$ and have no multiple zeroes in the interval $[0,\delta]$. This set has full measure w.r.t.\ the law of $Z_{\gamma(t)}$ (the a.s.\ absence of multiple zeroes follows from the Bulinskaya lemma; see, e.g.,~\cite[Lemma~4.3]{iksanov2016}). Hence, the continuous mapping theorem implies the distributional convergence of  $N_n [t, t+\delta/\sqrt n]$ to the number of zeroes of $Z_{\gamma(t)}$ in $[0,\delta]$ as $n\to\infty$.
\end{proof}

\subsection{Proof of Theorem~\ref{theorem:maintheorem} assuming uniform integrability}
As we shall prove in Lemma~\ref{lemma:hauptgnbeschraenkt}, below, the sequence~\eqref{eq:seq_N_n} is uniformly integrable for $0<\delta<1/2$. Assuming this, we can prove Theorem~\ref{theorem:maintheorem} as follows.
Let $\eps_0>0$ be so small that the interval $[a-2\eps_0, b+2\eps_0]$ is contained in $\mathcal D \cap (\R \backslash\{0\})$.  Define a  function $g_n:[a-\eps_0,b+\eps_0]\to [0,\infty)$ as follows:
$$
g_n(t) = \E N_n\left[t, t + \frac{\delta}{\sqrt n}\right].
$$
It follows from this definition that for sufficiently large $n$,
\begin{equation}\label{eq:sandwich}
\int_a^{b - (\delta/\sqrt n)} g_n (t) \D t  \leq  \frac{\delta}{\sqrt n} \E N_n[a,b] \leq \int_{a-(\delta/\sqrt n)}^{b} g_n (t) \D t.
\end{equation}
The distributional convergence stated in Lemma~\ref{lemma:hauptschwachekonvergenz} and the uniform integrability implied by Lemma~\ref{lemma:hauptgnbeschraenkt} yield the convergence of the expectations:
\begin{equation}\label{eq:E_N_n_local_conv}
\lim_{n\to\infty} g_n(t) = \lim_{n\to\infty} \E N_n \left[t, t+\frac{\delta}{\sqrt n}\right]  = \E  N_\infty^{(\gamma(t))}[0,\delta] = \frac{\delta}{\pi} \sqrt{\gamma(t)}
=
\frac{\delta}{2\pi}\sqrt{\frac{p'(t)}{t}+p''(t)}
\end{equation}
for all $t\in [a-\eps_0, b+\eps_0]$; see also Lemma~\ref{lemma:exprootsofstatgaussianprocessunitinterval} for the expected number of zeroes of the limit process.
Also, it follows from Lemma~\ref{lemma:hauptgnbeschraenkt} that $0\leq g_n(t) \leq C$ for some constant $C>0$ and all sufficiently large $n$. Utilizing the dominated convergence theorem, we arrive at
$$
\lim_{n\to\infty} \int_a^{b - (\delta/\sqrt n)} g_n (t) \D t = \lim_{n\to\infty} \int_{a-(\delta/\sqrt n)}^{b} g_n (t) \D t = \frac{\delta}{2\pi}\INT{a}{b}
\sqrt{\frac{p'(t)}{t}+p''(t)} \D t.
$$
The sandwich lemma, applied to~\eqref{eq:sandwich}, yields
$$
\lim_{n\to\infty} \frac{\E N_n[a,b]}{\sqrt n} = \frac{1}{2\pi}\INT{a}{b}
\sqrt{\frac{p'(t)}{t}+p''(t)} \D t,
$$
thus completing the proof of Theorem~\ref{theorem:maintheorem}.

\section{Uniform integrability of \texorpdfstring{$N_n$}{Nn} on intervals of length \texorpdfstring{$\delta n^{-1/2}$}{delta sqrt n}}\label{sec:unif_integr}

\subsection{Statement of the main lemmas}
We recall that $N_n[a,b]$ is the number of real zeroes of $P_n$ (defined by~\eqref{eq:defrandompolynomial} and~\eqref{eq:thefourcases}) in the interval $[a,b]\subset \mathcal{D}\cap \R$. We aim to prove the following

\begin{lemma}\label{lemma:hauptgnbeschraenkt}
Fix some  $0<\delta<1/2$ and a compact set $K\subset \mathcal{D}\cap (\R\backslash \{0\})$. Let also $1<\kappa<2$. Then there exists a constant $C>0$  such that for all sufficiently large $n\in\N$ and  all $t\in K$,
\begin{equation*}
\EW{N^\kappa_n\l[t,t+\frac{\delta}{\sqrt{n}}\r]}<C.
\end{equation*}
\end{lemma}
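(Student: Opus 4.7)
\emph{Plan.} I would pass to the rescaled function $Q_{n,t}$ from $\eqref{eq:definitionQ}$ and apply Jensen's formula to bound the number of real zeros on $[0,\delta]$ in terms of (logarithms of) a maximum over a complex disk and the modulus of $Q_{n,t}$ at one non-random reference point. Since $[0,\delta]$ is contained in the closed disk $\{|z-\delta/2|\leq \delta/2\}$, Jensen's formula on a concentric disk of a fixed outer radius $R>\delta/2$ gives, on the event $\{Q_{n,t}(\delta/2)\neq 0\}$,
\begin{equation*}
N_n\!\left[t,t+\tfrac{\delta}{\sqrt n}\right] \leq c_\delta \Bigl( \log^+ M_n + \log^+ |Q_{n,t}(\delta/2)|^{-1} \Bigr), \qquad M_n := \max_{|z-\delta/2|\leq R}|Q_{n,t}(z)|.
\end{equation*}
Since $(a+b)^\kappa \leq 2^\kappa(a^\kappa + b^\kappa)$, it then suffices to bound $\mathbb{E}[(\log^+ M_n)^\kappa]$ and $\mathbb{E}[(\log^+|Q_{n,t}(\delta/2)|^{-1})^\kappa]$ separately, uniformly in $n$ and $t\in K$.

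\emph{Upper tail.} The estimate $\eqref{eq:qvarabsch}$ extends (with the same argument) to a uniform bound $\mathbb{E}|Q_{n,t}(z)|^2\leq C$ on a slightly enlarged disk $\{|z-\delta/2|\leq R'\}$ with $R'>R$. Expanding $Q_{n,t}$ as a Taylor series around $\delta/2$ and estimating the second moments of its coefficients via Cauchy's integral formula yields $\mathbb{E}|c_k|^2\leq C(R')^{-2k}$, and then Minkowski's inequality in $L^2$ gives $\mathbb{E}[M_n^2]\leq C'$ uniformly in $n$ and $t\in K$. Since $(\log^+ x)^\kappa\leq C_\varepsilon(1+x^\varepsilon)$ for any $\varepsilon>0$, taking $\varepsilon\in(0,2)$ and using $\mathbb{E}[M_n^\varepsilon]\leq \mathbb{E}[M_n^2]^{\varepsilon/2}$ completes this step.

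\emph{Lower tail.} Here the key point is anti-concentration of the real random variable $Q_{n,t}(\delta/2)=\sum_k a_{n,k}(\delta/2)\xi_k$. By $\eqref{eq:covariancestructurezwei}$ applied with $z_i=z_j=\delta/2$ we have $\sum_k a_{n,k}(\delta/2)^2\to 1$, and by Lemma $\ref{lemma:locallimittheorem}$ (whose proof is deferred) we have $\max_k|a_{n,k}(\delta/2)|\to 0$; the $\xi_k$ are i.i.d., centered, with unit variance, and in particular non-degenerate. The Kolmogorov--Rogozin concentration-function inequality then yields a polynomial-in-$\lambda$ anti-concentration bound of the form
\begin{equation*}
\sup_{x\in\mathbb{R}}\mathbb{P}\bigl(|Q_{n,t}(\delta/2)-x|\leq \lambda\bigr) \leq C\lambda,
\end{equation*}
uniformly in $n$ and $t\in K$ for $\lambda$ above a vanishing threshold, with a weaker but still summable control below that threshold. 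Writing $\mathbb{E}[(\log^+|Q_{n,t}(\delta/2)|^{-1})^\kappa]=\int_0^\infty \kappa s^{\kappa-1}\mathbb{P}(|Q_{n,t}(\delta/2)|\leq \e^{-s})\,\d s$, the polynomial decay makes this integral uniformly bounded.

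\emph{Main obstacle.} The anti-concentration step is the delicate one. Because only a second moment of $\xi_0$ is assumed, Berry--Esseen is out of reach, and one is forced to rely on concentration-function tools that only use non-degeneracy. The tight regime is $\lambda$ at and below the scale $\max_k|a_{n,k}(\delta/2)|$; here the exclusion $t\neq 0$ is essential, entering through the forthcoming Lemma $\ref{lemma:abschaetzungdmeins}$, which I expect is what guarantees that the coefficient mass is not concentrated on a single index and that the Rogozin-type bound remains effective. This is also what forces the restriction $0\notin[a,b]$ in Theorem $\ref{theorem:maintheorem}$.
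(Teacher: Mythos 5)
Your upper-tail half is fine: the uniform bound $\E|Q_{n,t}(z)|^2\leq C$ on a fixed disk (as in \eqref{eq:qvarabsch}) does give $\E[(\log^+ M_n)^\kappa]\leq C$ by the route you sketch. The genuine gap is in the lower-tail step. Under the paper's hypotheses ($\xi_0$ centered with unit variance, possibly lattice-valued, e.g.\ Rademacher), the quantity $\E\bigl[(\log^+|Q_{n,t}(\delta/2)|^{-1})^\kappa\bigr]$ need not be uniformly bounded, and can even be infinite: $Q_{n,t}(\delta/2)$ is a fixed weighted sum of the $\xi_k$'s, and nothing excludes an atom at $0$ (or atoms at super-exponentially small nonzero values), in which case $\int_0^\infty \kappa s^{\kappa-1}\P\bigl(|Q_{n,t}(\delta/2)|\leq \e^{-s}\bigr)\D s$ diverges. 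Your claimed anti-concentration input does not exist in the form you need: Kolmogorov--Rogozin gives $\sup_x\P(|Q_{n,t}(\delta/2)-x|\leq\lambda)\leq C\lambda$ only for $\lambda$ above the scale $\max_k|a_{n,k}|\asymp n^{-1/4}$, and \emph{below} that scale the bound saturates at a constant of order $n^{-1/4}$ with no further decay in $\lambda$ — there is no ``weaker but still summable control''. Even the paper's much more refined small-ball estimate (Lemma~\ref{lemma:abschaetzungdmeins}, proved by smoothing plus local limit theorems, not by Rogozin) only yields $\P(|Q_{n,t}(0)|<T)\leq C(T+T^{-1/2}\e^{-cn^{1/5}})$, which is useless at scales $T\leq \e^{-cn^{1/5}}$; so the scales $\e^{-s}$ with $s\gtrsim n^{1/5}$, which your $\log$-moment weights polynomially, are simply not controllable, and this is not a removable technicality but the reason the paper's architecture looks the way it does. (Your guess about the role of Lemma~\ref{lemma:abschaetzungdmeins} and of $t\neq 0$ is also off: $t\neq 0$ enters through the non-degeneracy of the coefficient distributions $a_{n,k}^2$ used in the local limit arguments, not through ``mass not concentrated on a single index''.)

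The paper avoids ever taking moments of $\log 1/|Q_{n,t}|$. It bounds the tail $\P(N_n[t,t+\delta n^{-1/2}]\geq m)$ (Lemma~\ref{lemma:abschaetzungdm}) by splitting on $\{|Q_{n,t}(0)|\geq T\}$: on that event an inequality of Ibragimov--Maslova gives $\P(N\geq m,\,|Q_{n,t}(0)|\geq T)\leq \frac{\delta^{2m}}{(m!)^2T^2}\sup_x\E|Q_{n,t}^{(m)}(x)|^2$, while the complementary event is covered by the small-ball bound above; optimizing $T=(2\delta)^{2m/3}$ gives a bound that decays geometrically in $m$ up to an additive term $(2\delta)^{-m/3}\e^{-cn^{1/5}}$. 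Summing $m\P(N\geq m)$ then requires a truncation at $m_0\asymp n^{1/5}$, and the residual event of enormously many zeroes ($N\geq \e^{n^\eps}$) is disposed of by a separate Jensen-type argument (Lemma~\ref{lemma:manyzeroesareunlikely}) in which the evaluation point is $P_n^{(k)}(0)$ on the event $B_k=\{|\xi_0|,\ldots,|\xi_{k-1}|\leq\e\eta,\ |\xi_k|>\e\eta\}$, where a \emph{deterministic} lower bound $|P_n^{(k)}(0)|\geq k!f_{n,k}\eta$ holds — so no lower tails of $|Q_{n,t}|$ are ever needed. To repair your argument you would have to replace the moment of $\log^+|Q_{n,t}(\delta/2)|^{-1}$ by a decomposition of this kind; as written, the lower-tail step fails.
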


\noindent
We shall deduce Lemma~\ref{lemma:hauptgnbeschraenkt} from the following two statements whose proofs are postponed:

\begin{lemma}\label{lemma:manyzeroesareunlikely}
Fix $1<\kappa<2$, an interval $[a,b]\subset \mathcal{D}\cap \R$ and $\epsilon>0$.  Then,
\begin{equation*}
\lim_{n\to\infty} \EW{N^\kappa_n[a,b]\IND{\{N_n[a,b]\geq \e^{n^\eps}\}}}=0.
\end{equation*}
\end{lemma}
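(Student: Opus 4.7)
The plan splits naturally according to whether $P_n$ is a polynomial. In the SP and WP ensembles $P_n$ has degree at most $n$, so $N_n[a,b]\leq n$ deterministically, and for every $n$ large enough that $n<\eee^{n^\epsilon}$ the indicator $\IND{\{N_n[a,b]\geq\eee^{n^\epsilon}\}}$ vanishes almost surely, making the expectation $0$. Throughout the rest of the argument I would therefore concentrate on the FAF and HAF cases, where $P_n$ is a genuine random analytic function.

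For these cases I would bound $N_n[a,b]$ via Jensen's formula on a complex disk. Fix a real $c\in\mathcal{D}\setminus\{0\}$ with $p(c)>0$ (any $c\neq 0$ works for FAF; any $c\in(-1,1)\setminus\{0\}$ works for HAF, for radii small enough to remain inside $\bD$) and pick $R<R'$ so that $[a,b]\subset\{|z-c|\leq R\}$ and $\{|z-c|\leq R'\}\subset\mathcal{D}$. Jensen's formula then yields
\[
 N_n[a,b]\leq N_n(\{|z-c|\leq R\})\leq\frac{1}{\log(R'/R)}\bigl(\log M_n-\log|P_n(c)|\bigr),\qquad M_n:=\sup_{|z-c|=R'}|P_n(z)|,
\]
reducing the problem to bounding $M_n$ from above and $|P_n(c)|$ from below, both in probability.

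The upper bound is routine: $M_n\leq\sum_k f_{n,k}(|c|+R')^k|\xi_k|$, and a weighted Cauchy--Schwarz (with weights $(k+1)^{-1}$) combined with~\eqref{eq:annahmevarmodphi} gives $\E M_n\leq\eee^{Cn}$, so $\P{M_n\geq\eee^{Cn+s}}\leq\eee^{-s}$ by Markov. The lower bound is more delicate. Since $P_n(c)=\sum_k(f_{n,k}c^k)\xi_k$ is a weighted sum of i.i.d.\ centered random variables of total variance $v_n(c)\sim\eee^{np(c)}$ which is exponentially large, with many coefficients of comparable magnitude, a Kolmogorov--Rogozin or Hal\'asz-type anti-concentration inequality should deliver
\[
 \P{|P_n(c)|\leq L}\leq C\bigl(L/\sqrt{v_n(c)}+\eee^{-cn}\bigr)\leq CL\eee^{-cn/2}+C\eee^{-cn}.
\]

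Combining these estimates one gets $\P{N_n[a,b]\geq T}\leq\eee^{-c'T}$ for $T\geq C_0 n$, whence the tail integration
\[
 \EW{N_n^\kappa\IND{\{N_n[a,b]\geq\eee^{n^\epsilon}\}}}=\INT{\eee^{n^\epsilon}}{\infty}\kappa t^{\kappa-1}\P{N_n[a,b]\geq t}\D t\leq C\INT{\eee^{n^\epsilon}}{\infty}t^{\kappa-1}\eee^{-c't}\D t
\]
tends to $0$ as $n\to\infty$. The main obstacle will be the anti-concentration step: under the bare second-moment assumption on $\xi_0$, the naive Kolmogorov--Rogozin inequality only produces a $C/\sqrt{n}$ bound uniform in $L$, which would make the tail integral diverge. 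Securing the linear-in-$L$ decay above requires either a Hal\'asz-type refinement, or a conditioning argument that fixes all but sufficiently many of the $\xi_k$'s in $P_n(c)$ to isolate a high-variance, approximately Gaussian block in the remaining partial sum.
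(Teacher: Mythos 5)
Your reduction for the \SP\ and \WP\ cases is exactly the paper's (the count is deterministically at most $n$), and the use of Jensen's formula for \FAF/\HAF\ is also in the spirit of the paper, but the way you close the argument has a genuine gap, and it is concentrated precisely where you suspect. Your plan needs $\P{N_n[a,b]\geq t}$ to decay in $t$ over the whole range $t\geq \e^{n^\eps}$, and via Jensen this forces a small-ball estimate $\P{|P_n(c)|\leq \e^{-ct}}\lesssim \e^{-c't}$ at scales that are \emph{doubly} exponentially small in $n$. No anti-concentration inequality available under the bare assumption $\E\xi_0^2=1$ (with atoms allowed, e.g.\ Rademacher) gives decay in $L$ at fixed $n$ below a saturation level: Kolmogorov--Rogozin/Hal\'asz saturate at a polynomially small level, and even the paper's own small-ball bound (Lemma~\ref{lemma:abschaetzungdmeins}) has the error term $T^{-1/2}\e^{-cn^{1/5}}$, which blows up as $T\to 0$ and is useless at these scales. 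Moreover, even granting your stated bound $\P{|P_n(c)|\leq L}\leq C(L\e^{-cn/2}+\e^{-cn})$, the conclusion $\P{N_n\geq T}\leq \e^{-c'T}$ for all $T\geq C_0n$ does not follow: the additive term $\e^{-cn}$ is independent of $T$, so you only get $\P{N_n\geq T}\leq \e^{-c'T}+C\e^{-cn}$, and then your tail integral $\int_{\e^{n^\eps}}^\infty t^{\kappa-1}\P{N_n\geq t}\,\D t$ picks up a divergent contribution $C\e^{-cn}\int_{\e^{n^\eps}}^\infty t^{\kappa-1}\D t$. So the final step collapses even before the anti-concentration issue is resolved.

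The paper avoids both problems by never asking for a probabilistic lower bound on $P_n$ at a point. It partitions $\Omega$ into the events $B_k=\{|\xi_0|\leq \e\eta,\dots,|\xi_{k-1}|\leq\e\eta,\ |\xi_k|>\e\eta\}$ (Ibragimov--Maslova), notes that on $B_k$ one has the \emph{deterministic} bound $|P_n^{(k)}(0)|\geq k!f_{n,k}\eta$, and applies Rolle plus Jensen's theorem to $P_n^{(k)}$ rather than to $P_n$. The huge threshold $\e^{n^\eps}$ and the restriction $\kappa<2$ are then exploited through the elementary bound $N_n^\kappa\1_{\{N_n\geq\e^{n^\eps}\}}\leq \e^{-n^\eps(2-\kappa)}N_n^2$, so that it suffices to show $\E[N_n^2\1_{B_k}]\leq C(k^4+1)n^4\,\P{B_k}$ — a polynomial bound, obtained from Lemma~\ref{lem:est_det_poly} together with Jensen's inequality for the concave function $\log^2$ on $[\e,\infty)$ — and then to sum over $k$ against $\P{B_k}=q^k(1-q)$. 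If you want to salvage your route, you would have to replace the single-point lower bound on $|P_n(c)|$ by some device of this kind that yields only polynomial (in $n$) control of a low moment of $N_n$, rather than exponential tail decay in $T$.
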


\begin{lemma}\label{lemma:abschaetzungdm}
Fix some sufficiently small $0<\delta<1/2$ and a compact set $K\subset \mathcal{D}\cap (\R\backslash\{0\})$.
Then there exist constants $C>0$ and $c>0$ such that
\begin{equation*}
\P{N_n[t,t+\delta n^{-1/2}] \geq m} \leq C\left(\left(2\delta\right)^{(2/3) m}+\left(2\delta\right)^{-(1/3)m}\EXP{-c n^{1/5}}\right)
\end{equation*}
for all $n\geq n_0$, $m\in\N$,  and $t\in K$.
\end{lemma}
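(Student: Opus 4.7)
The plan is to reduce the counting of real zeros of $Q_{n,t}$ on $[0,\delta]$ to a complex zero-counting problem via Jensen's inequality, then bound the two resulting quantities separately. Setting $z_0 := \delta/2$, the interval $[0,\delta]$ lies in the disk $\{|z-z_0|\leq \delta/2\}$. I would choose the radius $R$ in the definition of $A_{\rm real}(\bD_R)$ large enough (say $R=2$), so that for $t\in K$ and $n$ sufficiently large, $Q_{n,t}$ is analytic on $|z-z_0|\leq 1$. Jensen's inequality with inner radius $\delta/2$ and outer radius $1$ then yields, whenever $Q_{n,t}(z_0)\neq 0$,
\[
N_n[t,t+\delta/\sqrt n]\leq\frac{\log M_n-\log|Q_{n,t}(z_0)|}{\log(2/\delta)},\qquad M_n:=\max_{|z-z_0|\leq 1}|Q_{n,t}(z)|.
\]
The case $Q_{n,t}(z_0)=0$ is automatically captured by the small-ball event below.

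Next I would set $L_1:=(2/\delta)^{m/3}$ and $L_2:=(\delta/2)^{2m/3}$, so that $L_1/L_2=(2/\delta)^m$; on the event $\{M_n\leq L_1\}\cap\{|Q_{n,t}(z_0)|\geq L_2\}$ Jensen forces $N\leq m$. Absorbing a harmless shift of $m$ by one into the constant $C$, this gives
\[
\P{N_n[t,t+\delta/\sqrt n]\geq m}\leq\P{M_n>L_1}+\P{|Q_{n,t}(z_0)|<L_2}.
\]
For the first term, Cauchy's integral formula on a circle $|w-z_0|=R'$ with $1<R'<R$, combined with Cauchy--Schwarz, gives the deterministic estimate $M_n^2\leq C\oint|Q_{n,t}(w)|^2|\d w|$. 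Taking expectation and invoking the uniform bound $\E|Q_{n,t}(w)|^2\to\e^{2\gamma(t)(\IM w)^2}$ from~\eqref{eq:covariancestructurezwei} yields $\E M_n^2\leq C$. Markov's inequality then gives $\P{M_n>L_1}\leq C L_1^{-2}=C(\delta/2)^{2m/3}\leq C(2\delta)^{2m/3}$.

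For the second term I would establish an anti-concentration (small ball) estimate
\[
\P{|Q_{n,t}(z_0)|<\eta}\leq C\eta+C\,\e^{-cn^{1/5}}\qquad\text{for all }\eta>0.
\]
Specialising to $\eta=L_2$ gives $\P{|Q_{n,t}(z_0)|<L_2}\leq C(2\delta)^{2m/3}+C(2\delta)^{-m/3}\,\e^{-cn^{1/5}}$, using $(\delta/2)^{2m/3}\leq(2\delta)^{2m/3}$ and $(2\delta)^{-m/3}\geq 1$; adding to the Markov bound yields the claimed inequality. To prove the small-ball bound I would apply Esseen's concentration-function inequality to $Q_{n,t}(z_0)=\sum_k a_{n,k}(z_0)\xi_k$, whose characteristic function $\phi(u)=\prod_k\phi_{\xi}(a_{n,k}(z_0)u)$ satisfies $|\phi(u)|\leq\e^{-cu^2}$ on $|u|\leq c'/\tilde b_n$, thanks to $\sum_k a_{n,k}(z_0)^2=1$, the bound $|\phi_{\xi}(v)|\leq\e^{-v^2/3}$ for small $|v|$ (a consequence of $\E\xi_0=0$, $\E\xi_0^2=1$), and $\tilde b_n:=\max_k|a_{n,k}(z_0)|\to 0$ from Lemma~\ref{lemma:locallimittheorem}. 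The main obstacle is controlling $\phi$ on the complementary range $|u|>c'/\tilde b_n$ so as to extract the stretched-exponential factor $\e^{-cn^{1/5}}$: this demands a truncation of the $\xi_k$'s at a suitably chosen level $n^{\alpha}$ and a Bernstein-type concentration estimate on the truncated sum, with the exponent $1/5$ arising from balancing the truncation tail $\P{|\xi_0|>n^{\alpha}}\leq n^{-2\alpha}$ against the effective number of summands in $Q_{n,t}(z_0)$. It is precisely this balance, carried out under only the second-moment hypothesis $\E\xi_0^2<\infty$, that constitutes the technical heart of the argument.
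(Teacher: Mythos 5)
Your overall architecture is the same as the paper's: split $\{N_n[t,t+\delta n^{-1/2}]\geq m\}$ according to whether the normalized process is small at a real reference point, bound the ``many zeroes but not small'' part with a geometric-in-$m$ factor using only $\E|Q_{n,t}(z)|^2\leq C$, invoke an anti-concentration estimate for the other part, and optimize the threshold. Your first half is a legitimate alternative to the paper's: the paper bounds $\P{N_n\geq m,\ |Q_{n,t}(0)|\geq T}$ by $\frac{\delta^{2m}}{(m!)^2T^2}\sup_{x\in[0,\delta]}\E|Q_{n,t}^{(m)}(x)|^2$ via an Ibragimov--Maslova lemma and then controls the $m$-th derivative by Cauchy's formula, whereas you use Jensen's formula around $z_0=\delta/2$ together with $\E M_n^2\leq C$ and Markov; both yield the $(2\delta)^{2m/3}$-type term, and your shift of $m$ by one, the retuning of thresholds, and the fact that the reference point is $z_0$ rather than $0$ (note $Q_{n,t}(z_0)=Q_{n,t'}(0)$ with $t'=t+\delta/(2\sqrt n)$ still in a compact subset of $\mathcal{D}\cap(\R\setminus\{0\})$) are all harmless since $C$ may depend on $\delta$ and $K$.

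The genuine gap is the small-ball estimate, which you assert but do not prove, and which carries the entire content of the $\e^{-cn^{1/5}}$ term. The paper proves (Lemma~\ref{lemma:abschaetzungdmeins}, i.e.\ all of Section~\ref{sec:small_values}) only the weaker bound $\P{|Q_{n,t}(0)|<T}\leq C(T+T^{-1/2}\e^{-cn^{1/5}})$ --- which does suffice after retuning your $L_1,L_2$ --- and it does so by smoothing $Q_{n,t}(0)$ with a sum of two uniform variables, Fourier inversion, and a multi-scale decomposition of the frequency axis at the points $\tilde c/a_{n,k}$, crucially using that $(a_{n,k}^2)_{k\geq 0}$ is exactly a binomial/Poisson/negative binomial law (unimodal, mode near $n\mu$) together with moderate-deviation and local central limit theorems to show that the intermediate frequencies contribute $O(\e^{-cn^{1/5}})$. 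Your sketched substitute --- truncate the $\xi_k$ at level $n^{\alpha}$ and apply a Bernstein-type bound --- does not deliver this: Chebyshev only gives a polynomially small truncation-failure probability (and there are of order $n$ relevant coefficients), which cannot be absorbed into either term of the claimed bound uniformly in $m$, and Bernstein-type inequalities control upper tails of sums, not the high-frequency decay of the characteristic function, which is what the stretched-exponential term requires under a bare second-moment hypothesis. Moreover, the clean bound $\P{|Q_{n,t}(z_0)|<\eta}\leq C\eta+C\e^{-cn^{1/5}}$ you postulate is strictly stronger than what the paper establishes: it would force every atom or near-atom of $Q_{n,t}(z_0)$ to have mass $O(\e^{-cn^{1/5}})$, a delicate arithmetic statement for lattice coefficients such as Rademacher, and it is precisely what the paper's smoothing step avoids at the price of the $T^{-1/2}$ factor. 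So the reduction is fine, but the key anti-concentration lemma is missing, and the route you indicate for it would not work as described.
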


\begin{proof}[Proof of Lemma~\ref{lemma:hauptgnbeschraenkt} given Lemmas~\ref{lemma:manyzeroesareunlikely} and~\ref{lemma:abschaetzungdm}]
For $t\in K$ we write
\begin{equation*}
N_{n,t,\delta}:=N_n\left[t,t+\frac{\delta}{\sqrt{n}}\right].
\end{equation*}
We can find an interval $[a,b] \subset \mathcal{D}\cap \R$ such that $K\subset (a,b)$. Take some $\eps>0$.  Then, for sufficiently large $n$, we have $N_{n,t,\delta} \leq  N_n[a,b]$ and hence
$$
\EW{N^\kappa_{n,t,\delta}\IND{\{N_{n,t,\delta}\geq\e^{n^\eps}\}}} \leq \EW{N^\kappa_n[a,b]\IND{\{N_n[a,b]\geq \e^{n^\eps}\}}}<C
$$
in view of Lemma \ref{lemma:manyzeroesareunlikely}. Thus, it suffices to show that
$\EW{N^\kappa_{n,t,\delta}\IND{\{N_{n,t,\delta}\leq\e^{n^\eps}\}}}<C$.
Let $m_0=m_0(n,\delta)$ be chosen such that
\begin{equation}\label{eq:festlegung_mnull}
\EXP{\frac{c}{4}n^{1/5}}\leq (2\delta)^{-m_0/3} \leq \EXP{\frac{c}{2}n^{1/5}},
\end{equation}
where $c>0$ is the small constant from Lemma~\ref{lemma:abschaetzungdm}.
It follows from \eqref{eq:festlegung_mnull} that
\begin{equation}\label{eq:festlegung_mnull1}
(2\delta)^{2m_0/3} \leq \EXP{-\frac{c}{2}n^{1/5}}.
\end{equation}
Observe that $\{N_n[t,t+\delta n^{-1/2}] \geq 1\}\supseteq
\{N_n[t,t+\delta n^{-1/2}] \geq 2\} \supseteq \ldots$ is a non-increasing sequence of events. Therefore,
\begin{multline*}
\EW{N^\kappa_{n,t,\delta}\IND{\{N_{n,t,\delta}\leq \e^{n^\eps}\}}}\leq
\EW{N^2_{n,t,\delta}\IND{\{N_{n,t,\delta}\leq \e^{n^\eps}\}}}
\leq \sum_{m=1}^{\lfloor \e^{n^\eps} \rfloor}
(2m-1)\P{N_n[t,t+\delta n^{-1/2}] \geq m} \\
\leq \sum_{m=1}^{m_0} (2m-1)\P{N_n[t,t+\delta n^{-1/2}] \geq m}
+\P{N_n[t,t+\delta n^{-1/2}] \geq m_0}\sum_{m=m_0+1}^{\lfloor \e^{n^\eps} \rfloor} (2m-1),
\end{multline*}
where we observe that $m_0< \eee^{n^\eps}$ for sufficiently large $n$, as follows from~\eqref{eq:festlegung_mnull}.
Applying  Lemma~\ref{lemma:abschaetzungdm} we obtain, for sufficiently small $\eps>0$,
\begin{align*}
\EW{N^\kappa_{n,t,\delta}\IND{\{N_{n,t,\delta}\leq \e^{n^\eps}\}}}
&\leq
C\sum_{m=1}^\infty (2m-1) \left(2\delta\right)^{2m/3}
+C\EXP{-c n^{1/5}}\sum_{m=1}^{\lfloor\e^{n^\eps}\rfloor} (2m-1) \left(2\delta\right)^{-1/3m} \\
&\quad  +C\e^{2n^\eps} \left((2\delta)^{2m_0/3}+(2\delta)^{-m_0/3}\EXP{-cn^{1/5}}\right)  \\
&\leq C+C\e^{2n^\eps}\EXP{-\frac{c}{2}n^{1/5}}<C,
\end{align*}
where we used~\eqref{eq:festlegung_mnull} and~\eqref{eq:festlegung_mnull1}.
\end{proof}

\subsection{Proof of Lemma~\ref{lemma:manyzeroesareunlikely}}
We fix $1<\kappa<2$, an interval $[a,b]\subset \mathcal{D}\cap \R$ and $\epsilon>0$. Our aim is to prove that
\begin{equation}\label{eq:N_kappa_ab_leq_C}
\lim_{n\to\infty} \EW{N^\kappa_n[a,b]\IND{\{N_n[a,b]\geq \e^{n^\eps}\}}}=0.
\end{equation}
\begin{proof}[Proof of~\eqref{eq:N_kappa_ab_leq_C}]
First of all, the statement is trivial for spherical and Weyl polynomials because the number of real zeroes of a degree $n$ polynomial is bounded by $n$. In the following, we consider the \FAF\ and the \HAF\ cases only. In particular, the coefficients $f_{n,k}$ do not vanish.
The first step of the proof uses an argument based on the Jensen theorem which follows an idea of~\cite{ibragimov_maslova1} as developed in~\cite{flaschekabluchko2017}.  Since $\P{\xi_0=0} \neq 1$, we can choose a
sufficiently small $0<\eta<1$ such that
\begin{equation*}
q:=\P{|\xi_0|\leq \e\eta}<1.
\end{equation*}
For $k=0,1,\dots $ consider the events
\begin{equation*}
B_k:=\left\{|\xi_0|\leq \e \eta,\dots,|\xi_{k-1}|\leq \e \eta, |\xi_k|>\e \eta\right\}.
\end{equation*}
Keep in mind that
\begin{equation*}
\quad \P{B_k}=q^{k}(1-q) \quad \textrm{and}\quad \bigcup_{k=0}^\infty B_k=\Omega \mod \mathbb P,
\end{equation*}
where $(\Omega, \mathcal A, \mathbb P)$ is the probability space we are working on.
Let $P_n^{(k)}$ denote the $k$-th derivative of $P_n$. On the event $B_k$ we have
\begin{equation*}
\left|P_n^{(k)}(0)\right|=k! f_{n,k} |\xi_k| \geq k! f_{n,k} \e \eta \geq k! f_{n,k} \eta.
\end{equation*}
We shall use the abbreviation $N_n=N_n[a,b]$. By the theorem of Rolle, $N_n$ can be upperbounded by the number of zeroes of $P_n^{(k)}$ in the interval $[a,b]$ plus $k$. Choosing $r:=\max\{|a|,|b|\}$, we can estimate $N_n$ by the number of zeroes of $P_n^{(k)}$ in the disk $\bar \bD_r$ plus $k$.  Jensen's theorem (see, e.g.~\cite[pp.~280--281]{conway_book}) applied to $P_n^{(k)}$ yields that on the event $B_k$,
\begin{equation}
\label{eq:estimateNnBk}
N_n\leq k+ \frac 1 {\log\l(R/r\r)}\log\l(\frac{\sup_{|z|=R}{|P_n^{(k)}(z)|}}{|P_n^{(k)}(0)|}\r) \\
\leq k+C\log\l(\frac{1}{\eta}\sum_{j=k}^\infty \binom{j}{k} \frac{f_{n,j}}{f_{n,k}} |\xi_j| R^{j-k}\r),
\end{equation}
where we have chosen $R$ such that $r<R\in \mathcal{D}\cap \R$.
Besides, on $B_k$ we have
\begin{equation}\label{eq:logquadrat_concave}
\frac{1}{\eta}\sum_{j=k}^\infty \binom{j}{k} \frac{f_{n,j}}{f_{n,k}} |\xi_j| R^{j-k}
\geq \frac{|\xi_k|}{\eta}\geq  \e.
\end{equation}
On the other hand, we shall show in Lemma~\ref{lem:est_det_poly}, below, that there is $C>0$ such that for all $n\in\N$, $k\in\N_0$,
\begin{equation}\label{eq:uniform_abel}
\sum_{j=k}^{\infty} \binom{j}{k} \frac{f_{n,j}}{f_{n,k}} R^{j-k}
\leq \frac{C}{f_{n,k}}\left(\frac{2}{R\log(A/R)}\right)^{k}
\EXP{\frac{n}{2}p\left(\sqrt{AR}\right)},
\end{equation}
where $R<A\in \mathcal{D}\cap \R$ is a constant independent of $n$ and $k$ and
$p:\mathcal{D}\to\CN$ is given by \eqref{eq:p_corresponding_function}.
Using the same idea as in the standard proof of the Markov inequality, we obtain the estimate
\begin{align*}
\EW{N_n^\kappa\IND{\l\{N_n\geq  \e^{n^\eps}\r\}\cap B_k}}
&=
\EW{\frac{N_n^2}{N_n^{2-\kappa}}\IND{\l\{N_n\geq  \e^{n^\eps}\r\}\cap B_k}}
\leq
\EW{\frac{N_n^2}{\e^{n^\eps (2-\kappa)}}\IND{\l\{N_n \geq  \e^{n^\eps}\r\}\cap B_k}} \\
&\leq \e^{-n^\eps(2-\kappa)} \EW{N^2_n\IND{B_k}}
\end{align*}
which holds for all $k=0,1,\ldots$. The inequality
$(a+b)^2\leq 2a^2+2b^2$,
for all $a,b\in\R$, combined with~\eqref{eq:estimateNnBk} allows us to conclude that
\begin{align}\label{eq:est_one}
\EW{N_n^\kappa\IND{\l\{N_n \geq \e^{n^\eps}\r\}\cap B_k}}
\leq C\e^{-n^\eps(2-\kappa)}\l(k^2\P{B_k} +\EW{\log^2\l(\frac{1}{\eta}\sum_{j=k}^\infty \binom{j}{k} |\xi_j| \frac{f_{n,j}}{f_{n,k}} R^{j-k}\r)\IND{B_k}}\r).
\end{align}
Since the function $x\mapsto\log^2(x)$ is concave for $x\geq \e$ and in view of~\eqref{eq:logquadrat_concave}, we may use the inequality of Jensen (on the event $B_k$) to obtain the estimate
\begin{align*}
\EW{\log^2\l(\frac{1}{\eta}\sum_{j=k}^\infty \binom{j}{k} |\xi_j| \frac{f_{n,j}}{f_{n,k}} R^{j-k}\r)\IND{B_k}}
\leq \P{B_k} \log^2\l(\frac{1}{\P{B_k}}\EW{\frac{1}{\eta}\sum_{j=k}^\infty \binom{j}{k} |\xi_j| \frac{f_{n,j}}{f_{n,k}} R^{j-k}\IND{B_k}}\r).
\end{align*}
Treating the term with $j=k$ separately, using the independence of $(\xi_k)_{k\in\N_0}$, the observation $\EW{|\xi_j|\IND{B_k}} = \P{B_k} \E |\xi_1| \leq \P{B_k}$ for $j>k$, and~\eqref{eq:uniform_abel},  we obtain for sufficiently large $n$ the estimate
\begin{align}\label{eq:est_two}
\begin{split}
&\EW{\log^2\l(\frac{1}{\eta}\sum_{j=k}^\infty \binom{j}{k} |\xi_j| \frac{f_{n,j}}{f_{n,k}}
R^{j-k}\r)\IND{B_k}} \\
&\leq \P{B_k}\log^2\l(\frac{\EW{|\xi_k|\IND{B_k}}}{\eta\P{B_k}}+\frac{1}{\P{B_k}}\EW{\frac{1}{\eta}\sum_{j=k+1}^\infty \binom{j}{k} |\xi_j| \frac{f_{n,j}}{f_{n,k}} R^{j-k}\IND{B_k}}\r) \\
&\leq \P{B_k}\log^2\l(\frac{\EW{|\xi_k|\IND{\{|\xi_k|>\e\eta\}}\prod_{l=0}^{k-1}\IND{\{|\xi_l|\leq \e \eta\}}}}{\eta q^k(1-q)}+\frac{1}{\eta}\sum_{j=k+1}^\infty \binom{j}{k}  \frac{f_{n,j}}{f_{n,k}} R^{j-k}\r) \\
&\leq \P{B_k}\log^2\l(\frac{\EW{|\xi_k|\IND{\{|\xi_k|>\e\eta\}}}}{\eta (1-q)}+\frac{C}{\eta f_{n,k}}\left(\frac{2}{R\log(A/R)}\right)^{k}\e^{np(\sqrt{AR})/2}\r) \\
&\leq \P{B_k}\log^2\l(C+\frac{C}{\eta f_{n,k}}\left(\frac{2}{R\log(A/R)}\right)^{k}\e^{np(\sqrt{AR})/2}\r).
\end{split}
\end{align}
We have to estimate the right-hand side.
Recall that
\begin{equation*}
f^2_{n,k}=\begin{cases}
\frac{n^k}{k!} & \textrm{in the \FAF\ case,} \\
\binom{n+k-1}{k} & \textrm{in the \HAF\ case}.
\end{cases}
\end{equation*}
In the $\FAF$ case, we can use the inequality $(a+b)^2 \leq 2(a^2+b^2)$ to obtain the estimate
\begin{align}\label{eq:logquadratfnk_est_FAF}
\log^2(f^2_{n,k})
=\left(k\log n-\log (k!)\right)^2
\leq
2k^2\log^2n+2\log^2(k!)
\leq
C(k^4+1) n^2.
\end{align}
In the $\HAF$ case,  we have
\begin{align}\label{eq:logquadratfnk_est_HAF}
\log^2(f^2_{n,k})
=\log^2\l(\binom{n+k-1}{k}\r)
\leq \log^2\l((n+k)!\r)
\leq C(n+k)^4
\leq 8C (n^4 + k^4).
\end{align}
Taking together the estimates \eqref{eq:logquadratfnk_est_FAF} and \eqref{eq:logquadratfnk_est_HAF}
we obtain the following estimate which is valid both for \HAF\ and \FAF:
\begin{equation}\label{eq:logquadratfnk_est}
\log^2(f^2_{n,k}) \leq  C(k^4+1) n^4 \quad \textrm{for all }k\in\N_0, n\in\N.
\end{equation}
For a given $C>1$ there exists a constant $\tilde{C}>0$ such that $\log^2(C+x)\leq \log^2 x+\tilde{C}$ for all $x>0$.
Using this observation together with the estimate~\eqref{eq:logquadratfnk_est} and the inequality $(a +  b + c + d)^2 \leq 4(a^2+b^2+c^2+d^2)$, we can estimate the right-hand side of~\eqref{eq:est_two} as follows:
\begin{align*}
&\P{B_k}\log^2\left(C + \frac{C}{\eta f_{n,k}}\left(\frac{2}{R\log(A/R)}\right)^k\e^{np(\sqrt{AR})/2}\right) \\
&\leq \tilde C \P{B_k}  + \P{B_k} \log^2\left(\frac{C}{\eta f_{n,k}}\left(\frac{2}{R\log(A/R)}\right)^k\e^{np(\sqrt{AR})/2}\right) \\
& = \tilde C \P{B_k}+\P{B_k}\left(\log\frac{C}{\eta}-\frac{1}{2}\log\l(f_{n,k}^2\r)+k\log\l(\frac{2}{R\log(A/R)}\r)+\frac{n}{2}p\l(\sqrt{AR}\r)\right)^2 \\
&\leq \tilde C \P{B_k} + 4\P{B_k}\left(\log^2\frac{C}{\eta}+\frac{1}{4}\log^2\l(f_{n,k}^2\r)+k^2\log^2\l(\frac{2}{R\log(A/R)}\r)+\frac{n^2}{4}p^2\l(\sqrt{AR}\r)\right) \\
&\leq C\P{B_k}(k^4+1) n^4,
\end{align*}
where in the last step we estimated terms of the form $1, k^2, n^2$ by $n^4, k^4n^4, n^4$, respectively.
Combining this with the above estimates \eqref{eq:est_one} and \eqref{eq:est_two} yields
\begin{equation*}
\EW{N_n^\kappa\IND{\l\{N_n \geq \e^{n^\eps}\r\}}\IND{B_k}}
\leq C\e^{-n^\eps(2-\kappa)} \P{B_k}(k^4+1) n^4
=
C\e^{-n^\eps(2-\kappa)} n^4 (k^4+1) q^k (1-q)
.
\end{equation*}
Therefore, taking the sum over $k=0,1,\ldots$, we arrive at
\begin{align*}
\EW{N_n^\kappa\IND{\l\{N_n \geq \e^{n^\eps}\r\}}}
=\sum_{k=0}^\infty \EW{N_n^\kappa\IND{\l\{N_n \geq\e^{n^\eps}\r\}}\IND{B_k}}
\leq C \e^{-n^\eps(2-\kappa)} n^4 \sum_{k=0}^\infty (k^4+1)q^k(1-q).
\end{align*}
The right-hand side  converges to $0$ as $n\to\infty$ because the sum is a finite constant. This completes the proof of~\eqref{eq:N_kappa_ab_leq_C}.
\end{proof}

\subsection{Proof of Lemma~\ref{lemma:abschaetzungdm}}
Fix some sufficiently small $0<\delta<1/2$, some $1<\kappa<2$ and a compact set $K\subset \mathcal{D}\cap (\R\backslash\{0\})$.
Our aim is to prove that there exist constants $C>0$ and $c>0$ such that
\begin{equation}\label{eq:P_D_t_m_n_delta_est}
\P{N_n[t,t+\delta n^{-1/2}] \geq m}\leq C\left(\left(2\delta\right)^{(2/3)m}+\left(2\delta\right)^{-(1/3)m}\EXP{-c n^{1/5}}\right) \quad
\textrm{for all }n,m\in\N, \textrm{ for all } t\in K.
\end{equation}

\begin{proof}[Proof of~\eqref{eq:P_D_t_m_n_delta_est}]
Recall from~\eqref{eq:definitionQ} that the process $Q_{n,t}$ is given by
$$
Q_{n,t} (z) =  \left(v_n\left(t+\frac{z}{\sqrt{n}}\right)\right)^{-1/2} P_n\left(t+\frac{z}{\sqrt{n}}\right).
$$
For $T>0$ and $m\in\N$ we write
\begin{equation}\label{eq:zerlegungPDMJ}
\P{N_n[t,t+\delta n^{-1/2}] \geq m} \leq \P{N_n[t,t+\delta n^{-1/2}] \geq m, \left|Q_{n,t}(0)\right|\geq T} +
\P{\left|Q_{n,t}(0)\right|< T}
\end{equation}
and estimate the terms on the right-hand side separately.

\vspace*{2mm}
\noindent
\textit{The first term on the right-hand side of~\eqref{eq:zerlegungPDMJ}}
will be estimated by using a lemma of Ibragimov and Maslova~\cite{ibragimov_maslova1}; see also~\cite[Lemma~4.4]{flaschekabluchko2017} for the proof in the generality needed here.
In our setting, this lemma states that
\begin{equation*}
\P{N_n[t,t+\delta n^{-1/2}] \geq m, \left|Q_{n,t}(0)\right|\geq T}
\leq \frac{\delta^{2m}}{(m!)^2T^2}\sup_{x\in[0,\delta]} \E \l| Q^{(m)}_{n,t}(x)\r|^2,
\end{equation*}
where $Q^{(m)}_{n,t}$ denotes the $m$-th derivative of $Q_{n,t}$. Since for sufficiently large $n$, the function $(Q_{n,t}(z))_{|z|< 2}$ is analytic,
Cauchy's integral formula for analytic functions yields, for all $x\in [0,\delta]$,
\begin{equation*}
\l|Q^{(m)}_{n,t}(x)\r|=
\frac{m!}{2\pi}
\l|\oint_{\partial \bD} \frac{Q_{n,t}(z)}{(z-x)^{m+1}}\D z \r|
\leq\frac{m!}{2\pi}\int_{\partial \bD} \frac{|Q_{n,t}(z)|}{(1-\delta)^{m+1}}\ |\d z|,
\end{equation*}
where $\partial \bD=\{|z|=1\}$ is the unit circle.
After squaring, taking the expectation and using Jensen's inequality for the quadratic function two times
we obtain
\begin{align*}
\E \l|Q_{n,t}^{(m)}(x)\r|^2
&\leq \frac{(m!)^2}{4\pi^2(1-\delta)^{2m+2}}\E \left(\int_{\partial \bD} |Q_{n,t}(z)|\ |\d z|\right)^2
\\
&\leq \frac{(m!)^2}{2\pi(1-\delta)^{2m+2}}\E \int_{\partial \bD} |Q_{n,t}(z)|^2\ |\d z| \\
&\leq \frac{(m!)^2}{2\pi(1-\delta)^{2m+2}}\int_{\partial \bD} \E |Q_{n,t}(z)|^2\ |\d z| \\
&\leq \frac{(m!)^2}{ (1-\delta)^{2m+2}}\sup_{z\in \partial \bD} \E |Q_{n,t}(z)|^2.
\end{align*}
Since the above holds for arbitrary $x\in [0,\delta]$,  it follows that
\begin{equation*}
\sup_{x\in[0,\delta]} \E \l| Q^{(m)}_{n,t}(x)\r|^2
\leq \frac{(m!)^2}{2\pi (1-\delta)^{2m+2}}\sup_{|z|\leq 1} \E \l|Q_{n,t}(z)\r|^2.
\end{equation*}
Using the estimate given in \eqref{eq:qvarabsch} (with $R=1$) we find a constant $C>0$ such that
\begin{equation*}
\E \left|Q_{n,t}(z)\right|^2\leq C \quad \textrm{for all } |z|\leq 1,\;\;
n\in\N, \;\; t\in K.
\end{equation*}
Taking everything together yields the following estimate for the first term on the right-hand side of \eqref{eq:zerlegungPDMJ}:
\begin{equation}\label{eq:erstertermabsch}
\P{N_n[t,t+\delta n^{-1/2}] \geq m, \left|Q_{n,t}(0)\right|\geq T} \leq
\frac{C}{T^2}\frac{\delta^{2m}}{(1-\delta)^{2(m+1)}}\leq
\frac{C}{T^2}\left(\frac{\delta}{1-\delta}\right)^{2m}
\leq \frac{C}{T^2}\left(2\delta\right)^{2m},
\end{equation}
for every $T>0$, where we used that $0<\delta<1/2$ in the last step.

\vspace*{2mm}
\noindent
\textit{The second term on the right-hand side of~\eqref{eq:zerlegungPDMJ}}
will be estimated in  Lemma \ref{lemma:abschaetzungdmeins}, below, which  states that
$$
\P{\left|Q_{n,t}(0)\right|< T} \leq C\left(T+T^{-1/2} \EXP{-c n^{1/5}}\right).
$$
Taking the estimates for both terms on the right-hand side of~\eqref{eq:zerlegungPDMJ} together, we obtain
\begin{equation*}
\P{N_n[t,t+\delta n^{-1/2}] \geq m}\leq C\left(
\frac{\left(2\delta\right)^{2m}}{T^2}+T+T^{-1/2}\EXP{-c n^{1/5}}
\right).
\end{equation*}
Optimizing the bound by choosing $T=(2\delta)^{(2/3)m}$ completes the proof of~\eqref{eq:P_D_t_m_n_delta_est}.
\end{proof}

\section{Estimating the probability of small values of \texorpdfstring{$P_n$}{Pn}} \label{sec:small_values}
Recall from~\eqref{eq:definitionQ} that
$$
Q_{n,t}(0) = \frac{P_n(t)}{\sqrt{v_n(t)}} = \sum_{k=0}^\infty \frac{f_{n,k} t^k}{\sqrt{v_n(t)}} \xi_k.
$$
The main result of the present section is the following lemma that estimates the probabilities of small values of $Q_{n,t}(0)$.
\begin{lemma} \label{lemma:abschaetzungdmeins}
Fix a compact set $K\subset \mathcal{D}\cap (\R\backslash \{0\})$.
There exist constants $C=C(K)>0$ and $c=c(K)>0$ such that
\begin{equation*}
\P{\left|Q_{n,t}(0)\right|< T} \leq C\left(T+T^{-1/2} \EXP{-c n^{1/5}}\right)
\end{equation*}
for all $T>0$, $n \geq n_0$ and for all $t\in K$.
\end{lemma}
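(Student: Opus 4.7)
The plan is to apply Esseen's concentration inequality to convert the small-ball bound into an integral estimate for the characteristic function. Writing $Q_{n,t}(0)=\sum_{k\geq 0}a_{n,k}\xi_k$ with $a_{n,k}:=f_{n,k}t^k/\sqrt{v_n(t)}$ (so that $\sum_k a_{n,k}^2=1$), Esseen's inequality yields
\[
\P{|Q_{n,t}(0)|<T}\leq CT\int_{-1/T}^{1/T}|\phi_n(u)|\,du,
\qquad \phi_n(u)=\prod_{k\geq 0}\phi_\xi(a_{n,k}u),
\]
and the proof reduces to estimating this integral of $|\phi_n|$.

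First I would combine Lemma~\ref{lemma:locallimittheorem} with the Laplace/saddle-point asymptotics of $v_n(t)=\sum_k f_{n,k}^2 t^{2k}$ to extract the structure of $(a_{n,k}^2)_{k\geq 0}$: uniformly in $t$ on the compact set $K$, this sequence behaves like a discrete Gaussian of width $\asymp\sqrt n$. Concretely, $\max_k a_{n,k}\leq Cn^{-1/4}$ and there is a set $I_n(t)\subset\N_0$ with $|I_n(t)|\geq c\sqrt n$ on which $a_{n,k}\asymp n^{-1/4}$. I would then split the integration at $|u|=c_0 n^{1/4}$ for a sufficiently small constant $c_0>0$.

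For the inner range $|u|\leq c_0 n^{1/4}$, the Taylor expansion $|\phi_\xi(v)|^2=1-v^2+o(v^2)$ (using $\E\xi=0$ and $\E\xi^2=1$) provides a radius $v_0>0$ on which $|\phi_\xi(v)|^2\leq e^{-v^2/2}$; choosing $c_0$ small enough that every $|a_{n,k}u|\leq v_0$, the product over $k$ yields $|\phi_n(u)|^2\leq e^{-u^2/2}$, and the resulting integral is $O(1)$. This produces the $CT$ contribution to the claimed bound.

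The main obstacle is the outer range $c_0 n^{1/4}\leq|u|\leq 1/T$, where the pointwise decay of $|\phi_\xi(a_{n,k}u)|$ can fail, most tellingly for lattice-valued $\xi$ whose characteristic function has $|\phi_\xi|=1$ on a $2\pi$-periodic lattice. My plan is to establish the uniform estimate
\[
\sup_{|u|\geq c_0 n^{1/4}}|\phi_n(u)|\leq C e^{-c n^{1/5}}
\]
via a Riemann-sum analysis of $\sum_k\log|\phi_\xi(a_{n,k}u)|$. Because the Gaussian shape of $(a_{n,k})_{k\in I_n(t)}$ arranges the values $(a_{n,k}u)_k$ as a fine net inside a relevant interval, and because the average of $\log|\phi_\xi|$ over any such interval is strictly negative (non-degeneracy of $\xi$), one obtains cumulative exponential decay of magnitude at least $e^{-c\sqrt n}$, which is well beyond what is required. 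An alternative implementation uses truncation at level $M_n=n^{1/5}$: splitting $\xi_k=\xi_k\ind_{\{|\xi_k|\leq M_n\}}+\xi_k\ind_{\{|\xi_k|>M_n\}}$ gives a truncated sum $\widetilde Q$ whose characteristic function admits strong decay (via the bounded third moment and the $\sqrt n$-many active indices) while the remainder is controlled by Chebyshev ($\E[\xi^2\ind_{\{|\xi|>M_n\}}]=o(1)$), and the choice $M_n=n^{1/5}$ is precisely the one that balances the Berry--Esseen-style rate $M_n n^{-1/4}$ against the tail contribution. Propagating the outer bound through Esseen gives $\P{|Q_{n,t}(0)|<T}\leq C(T+e^{-cn^{1/5}})$, which implies the stated estimate since $e^{-cn^{1/5}}\leq T^{-1/2}e^{-cn^{1/5}}$ for $T\leq 1$ while the lemma is trivial for $T\geq 1$. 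The hardest part is making the Riemann-sum (or truncation) estimate uniform over the whole outer range and across all possible lattice structures of $\xi$, which is where the delicate local-limit behaviour of $(a_{n,k})$ really enters.
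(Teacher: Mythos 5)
Your overall architecture — pass to the characteristic function of $Q_{n,t}(0)$, split the integral at $u\asymp n^{1/4}$, and use $|\phi_\xi(v)|\leq \e^{-v^2/4}$ on the inner range — matches the paper's inner estimate (the integral over what the paper calls $\Gamma_{n,0}$). But there is a genuine gap in the outer range, and it shows already in the form of your claimed final bound. You assert $\P{|Q_{n,t}(0)|<T}\leq C\left(T+\e^{-cn^{1/5}}\right)$ with no power of $T$ multiplying the exponential. This cannot be true uniformly in $T$ when $\xi$ is lattice-valued. For Rademacher $\xi$ and a polynomial case (SP or WP), $Q_{n,t}(0)=\sum_{k=0}^n a_{n,k}\xi_k$ is a discrete random variable; since only $O(\sqrt n)$ of the weights $a_{n,k}$ are non-negligible and they are of size $\asymp n^{-1/4}$, the maximal point mass of $Q_{n,t}(0)$ decays at best polynomially in $n$ (Erd\H{o}s' Littlewood--Offord bound gives $O(n^{-3/4})$ even when the active weights are all distinct). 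Taking $T\to 0$ in your bound would then force that point mass to be $O(\e^{-cn^{1/5}})$, which is false. The $T^{-1/2}$ in the lemma's statement is therefore not cosmetic: it is exactly the price of the phenomenon you do not control. Correspondingly, the outer-range estimate $\sup_{|u|\geq c_0 n^{1/4}}|\phi_n(u)|\leq C\,\e^{-cn^{1/5}}$ is also unachievable: for $u\gg n^{3/4}$ the spacing of the points $a_{n,k}u$ within one period of $\phi_\xi$ exceeds $1$, so the Riemann-sum heuristic stops operating, and in fact $\limsup_{u\to\infty}|\phi_n(u)|$ equals the maximal point mass of $Q_{n,t}(0)$, which as noted is only polynomially small. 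Truncating $\xi$ at level $M_n$ does not fix this, because a truncated lattice distribution is still lattice, so $\phi_n$ still fails to decay at infinity.

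The device the paper uses and that is missing from your plan is \emph{smoothing}: one replaces $Q_{n,t}(0)$ by $\tilde Q_n=Q_{n,t}(0)+\Theta_\lambda$, where $\Theta_\lambda$ is a sum of two independent $\mathrm{Unif}[-\lambda,\lambda]$'s, independent of everything. Its characteristic function $\psi_\lambda(u)=\sin^2(\lambda u)/(\lambda u)^2$ decays like $1/(\lambda^2u^2)$, so the integrand $\psi_\lambda(u)\prod_k|\phi(a_{n,k}u)|$ is integrable on $[0,\infty)$ even in the lattice case. The price is an extra error term $\P{|\Theta_\lambda|\geq T/2}\leq C\lambda^2/T^2$ by Chebyshev. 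The integral over $[0,\infty)$ is then cut, not at a single scale, but into intervals $\Gamma_{n,k}=[\tilde c/a_{n,k},\tilde c/a_{n,k-1})$ for $k=1,\dots,m_n$ (where $m_n$ is the mode of the distribution $\{a_{n,k}^2\}$, which is a binomial, Poisson, or negative binomial law); on $\Gamma_{n,k}$ one estimates exactly the first $k$ factors of $\prod_j|\phi(a_{n,j}u)|$ nontrivially, which, together with the $1/(\lambda^2 u^2)$ decay of $\psi_\lambda$, produces $I_{n,k}\leq C\lambda^{-2}a_{n,k}\EXP{-\nu F_{n,k}/a_{n,k}^2}$ with $F_{n,k}=\sum_{j<k}a_{n,j}^2$. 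The key quantitative input is a local CLT (Petrov) for the sum $S_n$ realizing $\P{S_n=k}=a_{n,k}^2$: it yields $F_{n,k}/a_{n,k}^2\geq c\,n^{1/2}/(1+x_{n,k}^2)$ with $x_{n,k}=(k-n\mu)/(\sigma\sqrt n)$, and splitting the sum over $k$ at $m_n-n^{1/10}\sqrt n$ gives $\sum_{k=1}^{m_n}I_{n,k}\leq C\lambda^{-2}\EXP{-cn^{1/5}}$. Finally, optimizing $\lambda$ in the resulting bound $C\bigl(T+T\lambda^{-2}\EXP{-cn^{1/5}}+\lambda^2T^{-2}\bigr)$ (namely $\lambda^2=T^{3/2}\EXP{-cn^{1/5}}$) produces the claimed $T^{-1/2}\EXP{-cn^{1/5}}$. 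So your inner-range estimate is sound, but the outer-range mechanism you propose cannot be made to work and is replaced in the paper by the smoothing plus a carefully calibrated decomposition of frequency space driven by the local CLT for $\{a_{n,k}^2\}$.
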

\begin{proof}
The first step is a smoothing argument similar to that used by Ibragimov and Maslova~\cite{ibragimov_maslova1}.
Take some $\lambda>0$ (to be chosen explicitly at the very end of the proof) and consider the random variable
\begin{equation*}
\tilde{Q}_{n}:=Q_{n,t}(0)+\Theta_\lambda,
\end{equation*}
where $\Theta_\lambda$ is the sum of two independent random variables that are uniformly distributed on
the interval $[-\lambda,\lambda]$ and independent of $Q_{n,t}(0)$. The characteristic function of $\Theta_\lambda$ is given
by
\begin{equation*}
\psi_\lambda(u):=\EW{\EXP{\I u \Theta_\lambda}}=\frac{\sin^2(u\lambda)}{u^2\lambda^2}.
\end{equation*}
For $T>0$ we have the estimate
\begin{equation}\label{eq:zerlegunglemmazehn}
\P{\left|Q_{n,t}(0)\right|\leq T}\leq \P{\l|\tilde{Q}_{n}\r| \leq \frac{3}{2}T}
+\P{|\Theta_\lambda|\geq \frac{1}{2}T}.
\end{equation}
In the following we shall estimate the two terms on the right-hand side of \eqref{eq:zerlegunglemmazehn}.

\vspace*{2mm}
\noindent
\textit{Second term of on the right-hand side of \eqref{eq:zerlegunglemmazehn}.} By Chebyshev's inequality,
\begin{equation}\label{eq:estimate_theta_cheby}
\P{\l|\Theta_\lambda\r|\geq \frac{1}{2}T}\leq \frac{4\lambda^2}{3T^2}.
\end{equation}

\vspace*{2mm}
\noindent
\textit{First term on the right-hand side of \eqref{eq:zerlegunglemmazehn}.}
Let $\tilde{\phi}_{n}$ denote the characteristic function of $\tilde{Q}_{n}$, that is
$$
\tilde{\phi}_{n}(u) = \psi_\lambda(u) \prod_{k=0}^\infty \phi(a_{n,k} u),
$$
where $\phi(u):=\E \EXP{\I u\xi_0}$ is the characteristic function of $\xi_0$ and
\begin{equation*}
a_{n,k}:=\frac{f_{n,k}
t^k}{\sqrt{v_n(t)}}.
\end{equation*}
The density of $\tilde{Q}_{n}$ exists because the random variable $\Theta_\lambda$ is absolutely continuous. Using the inversion formula for  the Fourier transform, the
distribution function of
$|\tilde{Q}_{n}|$ can be written as
\begin{align}
\begin{split}\label{eq:P_Q_leq_y}
\P{\l|\tilde{Q}_{n}\r|\leq y}
&=\frac{1}{2\pi}\INT{-y}{y}\INT{-\infty}{\infty} \tilde{\phi}_{n}(u) \e^{-\I u x}\D u \D x
=\frac{1}{\pi} \INT{0}{\infty} \frac{\sin(yu)}{u}
\RE \tilde{\phi}_{n}(u) \D u\\
&\leq
\frac{y}{\pi} \INT{0}{\infty} \psi_\lambda(u)
\prod_{k=0}^\infty\left|\phi\left(a_{n,k} u\right)\right|
\D u
\end{split}
\end{align}
for all $y\geq 0$, where in the last inequality we used the bounds $|\sin(yu)|\leq y u$ and $|\text{Re}\, \tilde{\phi}_{n}(u)| \leq | \tilde{\phi}_{n}(u)|$.

Since $\sum_{k=0}^\infty a_{n,k}^2=1$ for all $n\in\N$, we can view $\{a^2_{n,k}:k\in\N_0\}$ as a discrete probability distribution on $\N_0$.
In fact, in our four special cases of interest this distribution is given by
\begin{itemize}
\item the binomial distribution  $\textrm{Bin}(n,q(t))$ in the \SP\ case,
\item the Poisson distribution  $\textrm{Poi}(n q(t))$ in the \FAF\ case,
\item the negative binomial\footnote{A random variable $Z$ has negative binomial distribution $\textrm{NBin}(n,p)$ if $\P{Z=k} = \binom{n+k-1}{k} p^n(1-p)^k$ for $k=0,1,\ldots$.} distribution  $\textrm{NBin}(n,q(t))$ in the \HAF\ case,
\item the Poisson distribution  $\textrm{Poi}(n q(t))$ conditioned to the interval  $\{0,\ldots,n\}$ in the \WP\ case,
\end{itemize}
where
\begin{equation}\label{eq:def_q(t)}
q(t)
:=
\begin{cases}
t^2/(1+t^2) &\textrm{in the \SP\ case}, \\
t^2 &\textrm{in the \WP\ and the \FAF\ case,} \\
1-t^2 &\textrm{in the \HAF\ case}.
\end{cases}
\end{equation}
Indeed, in the \SP\ case we have
\begin{equation}\label{eq:a_n_k_binomial}
a_{n,k}^2 = \frac{f_{n,k}^2 t^{2k}}{v_n(t)}
=
\binom{n}{k}\left(\frac{t^2}{1+t^2}\right)^k\left(\frac{1}{1+t^2}\right)^{n-k}
=
\binom{n}{k}q(t)^k(1-q(t))^{n-k}.
\end{equation}
Similarly, in the \HAF\ case we have
\begin{equation}\label{eq:a_n_k_neg_binomial}
a_{n,k}^2 = \frac{f_{n,k}^2 t^{2k}}{v_n(t)}
=
\binom{n+k-1}{k}(1-t^2)^nt^{2k}
=
\binom{n+k-1}{k}q(t)^n (1-q(t))^{k}.
\end{equation}
Finally, in the \FAF\ case,
\begin{equation}\label{eq:a_n_k_poi}
a_{n,k}^2 = \frac{f_{n,k}^2 t^{2k}}{v_n(t)}
=
\e^{-nt^2} \frac{(nt^2)^k}{k!}.
\end{equation}
The \WP\ case is similar to the \FAF\ case except that now we have the restriction $k\in \{0,\ldots,n\}$ and the definition of $v_n(t)$ should be modified.
Observe also that  as long as  $t\in K$, $q(t)$ is bounded away from $0$ (in all four cases) and from $1$ (in the binomial and negative binomial cases).

The aforementioned discrete distributions are unimodal and their mode will be denoted by
\begin{equation*}
m_n :=\textrm{arg max}_{k=0,1,\ldots} a^2_{n,k}.
\end{equation*}
If there are several modes, we agree to take the smallest one. Since the random variables $\xi_0,\xi_1,\ldots$ are supposed to have zero mean and unit variance,
their characteristic function $\phi(u)=\E\EXP{\I u \xi_0}$ can be estimated by
\begin{equation}\label{eq:nontrivialestimationofcharfunction}
|\phi(u)|\leq \EXP{-\frac{u^2}{4}} \quad \textrm{for all } u\in[-\tilde{c},\tilde{c}]
\end{equation}
provided  $\tilde{c}>0$ is sufficiently small.
Let us cover $[0,\infty)$ by the following intervals:
\begin{align*}
&\Gamma_{n,0}:= \l[0, \tilde{c}/a_{n,m_n}\r), \quad \Gamma_{n,\infty}:= \l[\tilde{c}/a_{n,0},\infty\r),\\
&\Gamma_{n,k}:=
[\tilde{c}/ a_{n,k},\tilde{c}/ a_{n,k-1}), \quad \textrm{ for } k=1,\ldots, m_n,
\end{align*}
and define
\begin{align}\label{eq:def_Ink}
I_{n,k}:=
\int_{\Gamma_{n,k}} \psi_\lambda(u)\prod_{j=0}^\infty |\phi(a_{n,j} u)|\D u.
\end{align}
With this notation,  we can write~\eqref{eq:P_Q_leq_y} as follows:
\begin{equation}\label{eq:P_Q_leq_y_2}
\P{\l|\tilde{Q}_{n}\r|\leq y}  \leq  \frac y \pi \left(I_{n,0} + I_{n,\infty} + \sum_{k=1}^{m_n} I_{n,k}\right).
\end{equation}
In the following, we shall estimate the integrals $I_{n,k}$.

\vspace*{2mm}
\noindent
\textit{Estimate for $k=0$.}
For $u\in\Gamma_{n,0}$ we have $\max_{j=0,1,\ldots} |a_{n,j} u|  \leq \tilde{c}$ and therefore we can estimate all factors $|\phi(a_{n,j}u)|$
 in the nontrivial way by using~\eqref{eq:nontrivialestimationofcharfunction}, thus arriving at
\begin{equation}\label{eq:I_n_k_0}
I_{n,0} = \int_{\Gamma_{n,0}}\psi_{\lambda}(u)\prod_{j=0}^\infty |\phi(a_{n,j} u)| \D u
\leq\INT{0}{\tilde{c}/a_{n,m_n}}\EXP{-\frac{u^2}{4}\sum_{j=0}^\infty a_{n,j}^2} \D u
\leq\INT{0}{\infty}\EXP{-\frac{u^2}{4}} \D u \leq C,
\end{equation}
where we also used the estimate $|\psi_{\lambda}(u)|\leq 1$.

\vspace*{2mm}
\noindent
\textit{Estimate for $k = \infty$.}
On the interval $\Gamma_{n,\infty}$ we can use only the trivial estimate $|\phi(a_{n,j}u)|\leq 1$, which gives
\begin{equation}\label{eq:I_n_k_infty}
I_{n,\infty}
=\int_{\tilde{c}/a_{n,0}}^\infty \psi_{\lambda}(u)\prod_{j=0}^\infty |\phi(a_{n,j} u)| \D u
\leq
\int_{\tilde{c}/a_{n,0}}^\infty \frac 1 {u^2\lambda^2} \D u
=
\frac {a_{n,0}}{\tilde {c}\lambda^2}=\frac {f_{n,0}}{\tilde {c}\lambda^2 \sqrt{v_n(t)}}
=
O(\eee^{-n\eps})
\end{equation}
for sufficiently small $\eps>0$, where the last step follows from $f_{n,0}=1$ and the formula for $v_n(t)$; see~\eqref{eq:vn_explicit_fourcases}.

\vspace*{2mm}
\noindent
\textit{Estimate for $k\in \{1,\ldots,m_n\}$.}
On the interval $\Gamma_{n,k}$,  we are able to estimate the first $k$ factors of the product $\prod_{j=0}^\infty |\phi(a_{n,j}u)|$ non-trivially by~\eqref{eq:nontrivialestimationofcharfunction}, while the
remaining factors must be estimated trivially by $|\phi(a_{n,j}u)|\leq 1$.
It is convenient to introduce the partial sums of $a_{n,j}^2$ as follows:
\begin{equation*}
F_{n,k}:=\sum_{j=0}^{k-1} a_{n,j}^2.
\end{equation*}
For any $k\in \{1,\ldots, m_n\}$, the following estimate holds:
\begin{align*}
I_{n,k}&\leq\INT{\tilde{c}/a_{n,k}}{\tilde{c}/a_{n,k-1}} \frac{1}{u^2
\lambda^2}\EXP{-\frac{u^2}{4}F_{n,k}} \D u
\leq\INT{\tilde{c}\sqrt{F_{n,k}}/a_{n,k}}{\infty}\frac{\sqrt{F_{n,k}}}{s^2\lambda^2}\EXP{-\frac{s^2}{4}}\D s \\
&\leq C \frac{a_{n,k}}{\lambda^2}\EXP{-\nu\frac{F_{n,k}}{a_{n,k}^2}}
\end{align*}
for some sufficiently small $\nu>0$, where we used the inequality $\int_x^\infty s^{-2} \e^{-s^2/4} \D s \leq C x^{-1} \e^{-x^2/8}$, for all $x>0$ and some sufficiently large $C$.
Taking the sum, we may write
\begin{equation}\label{eq:sum_I}
\sum_{k=1}^{m_n} I_{n,k}\leq \frac{C}{\lambda^2}\l(
\sum_{k=1}^{\lfloor m_n-n^{1/10}\sqrt{n}\rfloor} a_{n,k}
+\sum_{k=\lceil m_n-n^{1/10}\sqrt{n}\rceil}^{m_n} \EXP{-\nu\frac{F_{n,k}}{a_{n,k}^2}}\right).
\end{equation}
%
In the following, we shall estimate both sums on the right-hand side of~\eqref{eq:sum_I}. But first we need to introduce some notation.
Let $X_1,X_2,\ldots$ be a sequence of i.i.d.\ random variables with
\begin{equation*}
X_{1}\sim \begin{cases}
\BINOMIALVERT{1}{q(t)} & \textrm{in the \SP\ case}, \\
\POI{q(t)} & \textrm{in the \FAF\ or \WP\ case},\\
\NBINOMIALVERT{1}{q(t)} & \textrm{in the \HAF\ case},
\end{cases}
\end{equation*}
where $q(t)$ is defined by~\eqref{eq:def_q(t)}.
Consider their partial sums $S_{n}:=\sum_{j=1}^n X_{j}$. The convolution properties of the above three distributions combined with~\eqref{eq:a_n_k_binomial}, \eqref{eq:a_n_k_neg_binomial}, \eqref{eq:a_n_k_poi} imply that  for all $k\in\N_0$ we have
\begin{equation*}
a_{n,k}^2
=
\begin{cases}
\P{S_{n}=k} & \textrm{in the \SP, \HAF\ and \FAF\ cases},\\
\BP{S_{n}=k}{S_{n}\leq n} & \textrm{in the \WP\ case},
\end{cases}
\end{equation*}
and, consequently,
$$
F_{n,k}
=
\begin{cases}
\P{S_{n} < k} & \textrm{in the \SP, \HAF\ and \FAF\ cases},\\
\BP{S_{n}<k}{S_{n}\leq n} & \textrm{in the \WP\ case}.
\end{cases}
$$

In the following, we shall estimate $a_{n,k}^2$ and $F_{n,k}$ using various refinements of the central limit theorem. Alternatively, the same could be done by using the Stirling formula.
Define $\mu=\mu(t)$ and $\sigma=\sigma(t)$ to be the mean and the variance of $X_1$, namely
\begin{align*}
\mu
:=\E X_{1}
&=\begin{cases}
q(t)  &\textrm{in the \SP, \FAF\ and \WP\ cases},\\
(1-q(t))/q(t) &\textrm{in the \HAF\ case},
\end{cases}\\
\sigma^2 := \VAR X_{1}
&=
\begin{cases}
q(t)(1-q(t))& \textrm{in the \SP\ case,} \\
q(t) &\textrm{in the \FAF\ and \WP\ cases},\\
(1-q(t))/q^2(t)& \textrm{in the \HAF\ case}.
\end{cases}
\end{align*}
Observe that the functions $t\mapsto \mu(t)$ and $t\mapsto \sigma^2(t)$ are continuous and do not vanish on the compact set $K\subset \mathcal{D}\cap (\R\backslash\{0\})$.  Thus, both functions are bounded away from $0$ and $\infty$, which we shall repeatedly use in the sequel.


\vspace*{2mm}
\noindent
\textit{First sum on the right-hand side of~\eqref{eq:sum_I}.}
There exists a constant $\tilde{\gamma}>0$ such that in all four cases of interest, the mode $m_n$ satisfies $|m_n - n\mu| \leq \tilde{\gamma}$. Indeed, the (smallest) mode of the Poisson distribution with mean $nq(t)$ is given by $\lceil n q(t) \rceil -1$. Similarly, for the binomial distribution with parameters $(n, q(t))$, the mode is $\lceil (n+1) q(t)\rceil -1$, while for the negative binomial distribution with parameters $(n,q(t))$ it is $\lceil (1-q(t))(n-1)/q(t)\rceil-1$.
This observation on the mode $m_n$ yields
\begin{equation}\label{eq:modusersetzen}
F_{n,\lceil m_n-n^{1/10}\sqrt{n}\rceil} \leq F_{n,\lceil n\mu + \tilde{\gamma}-n^{1/10}\sqrt{n}\rceil}.
\end{equation}
By Theorem 1 of Chapter 14, Section 6 of~\cite{feller1966} (which states that the central limit theorem holds in the sense of asymptotic equivalence provided the standardized deviation from the mean does not exceed $o(n^{1/6})$), we obtain in all cases except the \WP,
\begin{align*}
F_{n, \lceil n\mu+\tilde{\gamma}-n^{1/10}\sqrt{n}\rceil}
&=\P{S_{n} \leq \lfloor n\mu+\tilde{\gamma}-n^{1/10}\sqrt{n}\rfloor}
=\P{\frac{S_{n}-n\mu}{\sigma\sqrt{n}} \leq \frac{\lfloor n\mu+\tilde{\gamma}-n^{1/10}\sqrt{n}\rfloor - n\mu}{\sigma \sqrt n}}\\
&\sim\Phi\l(-\frac{n^{1/10}}{\sigma} + O\left(\frac 1 {\sigma \sqrt n}\right)\r)
\leq C\EXP{- n^{1/5}/C},
\end{align*}
as $n\to\infty$, where $\Phi$ is the standard normal distribution function satisfying $\Phi(z) \sim \eee^{-z^2/2}/(\sqrt{2\pi}|z|)$ as $z\to -\infty$. In the last step we used that the function $t\mapsto\sigma(t)$ is continuous and hence bounded on $K$.
In the \WP\ case we have
$$
F_{n, \lceil n\mu+\tilde{\gamma}-n^{1/10}\sqrt{n}\rceil}
=
\frac{\P{S_{n} \leq \lfloor n\mu+\tilde{\gamma}-n^{1/10}\sqrt{n}\rfloor}}{\P{S_n\leq n}}.
$$
Since $\mu(t) = t^2<1$ stays bounded away from $1$ for $t\in K$, $\P{S_n\leq n}$ converges to $1$ by the law of large numbers, and the same argument as in the \FAF\ case applies.
Therefore, for sufficiently small $c>0$, in all four cases it holds that
\begin{equation}\label{eq:zerlegung_Iklambdaeinseins}
\sum_{k=1}^{\lfloor m_n-n^{1/10}\sqrt{n}\rfloor} a_{n,k}
\leq
m_n \sqrt{F_{n, \lceil m_n-n^{1/10}\sqrt{n}\rceil}}
\leq
C m_n\EXP{- n^{1/5}/(2C)}
\leq C\EXP{-c n^{1/5}}.
\end{equation}

\vspace*{2mm}
\noindent
\textit{Second sum on the right-hand side of~\eqref{eq:sum_I}.}
Let us now estimate $F_{n,k}$ and $a_{n,k}^2$ uniformly over the range $k\in \{\lceil m_n-n^{1/10}\sqrt{n}\rceil,\dots, m_n\}$. Again, consider any of the four models except the \WP.
By Theorem 1 of Chapter 14, Section 6 of~\cite{feller1966}, we have
\begin{align*}
F_{n,k} = \P{S_{n} < k} \sim \Phi\left(\frac{k-n\mu}{\sigma\sqrt{n}}\right) = \Phi(x_{n,k}), \quad \textrm{as }n\to\infty,
\end{align*}
where we used the abbreviation
\begin{align*}
x_{n,k}&:=(k-n\mu)/(\sigma \sqrt{n})
\end{align*}
and the fact that $x_{n,k} = o(n^{1/6})$ in the aforementioned range of $k$.
To estimate $a_{n,k}^2$, we use a refined form of the local limit theorem. By Theorem 13 on p.\ 205 in~\cite{petrov}, we have
\begin{equation}\label{eq:asymptoticank}
\P{S_{n}=k}=\frac{1}{\sigma\sqrt{2\pi n}}\EXP{-\frac{x^2_{n,k}}{2}}\left(1+\frac{1}{\sqrt{n}}\frac{\kappa_3}{6\sigma^3}\left(x^3_{n,k}-3x_{n,k}\right) + o\left(\frac{1}{\sqrt{n}}\right)\right),
\end{equation}
uniformly over $k\in\Z$, where $\kappa_3$ is the third cumulant of $X_{1}$.
For  $k\in \{\lceil m_{n}-n^{1/10}\sqrt{n}\rceil,\dots, m_n\}$ we have $x_{n,k} = o(n^{1/6})$ and therefore the above simplifies to
\begin{equation*}
a_{n,k}^2 = \P{S_{n}=k} \sim \frac{1}{\sigma\sqrt{2\pi n}}\EXP{-\frac{x^2_{n,k}}{2}}, \quad \textrm{as }n\to\infty.
\end{equation*}
Using the inequality
$\Phi(x)\geq \frac{1}{\sqrt{2\pi}} \frac{1}{1+x^2}\eee^{-x^2/2}$, $x\in \R$,
we obtain
$$
\frac{F_{n,k}}{a^2_{n,k}}
\geq
\frac {c_1} {1+x_{n,k}^2} \eee^{-x_{n,k}^2/2} \cdot \sigma\sqrt{n} \, \eee^{x_{n,k}^2/2}
=
\frac{c_1 \sigma \sqrt n}{1+ x_{n,k}^2}
\geq
c_2 n^{2/5}.
$$
This leads to the estimate
\begin{equation}\label{eq:zerlegung_Iklambdaeinseins1}
\sum_{k=\lceil m_n-n^{1/10}\sqrt{n} \rceil}^{m_n} \EXP{-\nu\frac{F_{n,k}(t)}{a^2_{n,k}(t)}}
\leq
m_{n} \EXP{- c_2 n^{2/5}} \leq  C\EXP{-c n^{1/5}}
\end{equation}
that is valid in the \SP, \FAF\ and \HAF\ cases. In the \WP\ case, we have
$$
\frac{F_{n,k}}{a_{n,k}^2}
=
\frac{\BP{S_n < k}{S_n\leq n}}{\BP{S_n = k}{S_n\leq n}}
=
\frac{\P{S_n < k}}{\P{S_n = k}},
\quad k=0,\ldots,n,
$$
so that the same argument as above applies, thus establishing~\eqref{eq:zerlegung_Iklambdaeinseins1} in the \WP\ case.

In any of the four cases, recalling~\eqref{eq:sum_I} and using the estimates~\eqref{eq:zerlegung_Iklambdaeinseins} and~\eqref{eq:zerlegung_Iklambdaeinseins1}, we arrive at
\begin{equation}\label{eq:I_n_k_all}
\sum_{k=1}^{m_{n}} I_{n,k} \leq \frac{C}{\lambda^2}\EXP{-c n^{1/5}}.
\end{equation}

\vspace*{2mm}
\noindent
\textit{Completing the proof of Lemma~\ref{lemma:abschaetzungdmeins}.}
Taking the estimates~\eqref{eq:I_n_k_0}, \eqref{eq:I_n_k_infty} and~\eqref{eq:I_n_k_all} together and plugging them into~\eqref{eq:P_Q_leq_y_2} yields
\begin{equation*}
\P{\l|\tilde Q_{n}\r|\leq y} \leq Cy\left(1+
\frac{1}{\lambda^2}\EXP{-c n^{1/5}}\right).
\end{equation*}
Recalling~\eqref{eq:zerlegunglemmazehn} and~\eqref{eq:estimate_theta_cheby} and taking $y=3T/2$ we arrive at
\begin{equation*}
\P{\l|Q_{n,t}(0)\r|\leq T} \leq C\left(T+
\frac{T}{\lambda^2}\EXP{-c n^{1/5}}+\frac{\lambda^2}{T^2}\right).
\end{equation*}
Choosing $\lambda^2=T^{3/2}\EXP{-c n^{1/5}}$ optimizes this bound and completes the proof of Lemma~\ref{lemma:abschaetzungdmeins}.
\end{proof}

\section{Auxiliary lemmas}\label{sec:auxiliary}

In the following lemma we prove a statement which was used in the proof of Theorem~\ref{theo:wkonvgauss} when verifying the Lindeberg condition. Recall from~\eqref{eq:a_n_k_def} that we defined
$$
a_{n,k}(z):=\frac{f_{n,k}
\left(t+\frac{z}{\sqrt{n}}\right)^k}{\sqrt{v_n(t+\frac{z}{\sqrt{n}})}}.
$$

\begin{lemma}\label{lemma:locallimittheorem}
Fix some $t\in \mathcal{D}\cap (\R\backslash \{0\})$. Then, for every $z\in\CN$ we have
\begin{equation*}
\lim_{n\to\infty} \max_{k=0,1,\ldots}|a_{n,k}(z)|^2=0.
\end{equation*}
\end{lemma}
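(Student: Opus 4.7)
The plan is to rewrite $|a_{n,k}(z)|^2$ as a probability mass function multiplied by a uniformly bounded prefactor, then apply a local limit theorem to the PMF to obtain a factor of $n^{-1/2}$ uniformly in $k$.

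Set $\tilde t_n := t + z/\sqrt n$ and $s_n := |\tilde t_n|$. Since $v_n(s) = \sum_{k\geq 0} f_{n,k}^2 s^{2k}$ for real $s\in\mathcal D$, I would decompose
$$|a_{n,k}(z)|^2 = \frac{f_{n,k}^2\, s_n^{2k}}{|v_n(\tilde t_n)|} = \frac{v_n(s_n)}{|v_n(\tilde t_n)|}\cdot \frac{f_{n,k}^2 s_n^{2k}}{v_n(s_n)}.$$
By the computations leading to \eqref{eq:a_n_k_binomial}--\eqref{eq:a_n_k_poi} (with $t$ replaced by $s_n$), the second factor is the probability mass function at $k$ of a binomial, Poisson, negative binomial, or conditioned-Poisson distribution whose parameter depends continuously on $s_n$. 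Since $s_n\to |t|>0$ lies in the interior of $\mathcal D\cap\R$ in all four cases, that parameter converges to a nondegenerate limit, and the local central limit theorem already cited after~\eqref{eq:asymptoticank} (or a direct Stirling estimate) yields $\max_k f_{n,k}^2 s_n^{2k}/v_n(s_n) = O(n^{-1/2})$.

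For the prefactor I would apply~\eqref{eq:annahmevarmodphi} to both $s_n$ and $\tilde t_n$ (which lie in $\mathcal D$ for large $n$ by openness of $\mathcal D$), giving
$$\frac{v_n(s_n)}{|v_n(\tilde t_n)|} \sim \exp\bigl(n[p(s_n) - \RE\, p(\tilde t_n)]\bigr).$$
A second-order Taylor expansion of $p$ around $t$, using
$$s_n - |t| = \sgn(t)\, \RE(z)/\sqrt n + (\IM z)^2/(2|t|n) + O(n^{-3/2})$$
together with the evenness of $p$ in all four cases (so $p'(|t|) = \sgn(t)\, p'(t)$ and $p''(|t|) = p''(t)$), then shows that the leading $n^{-1/2}$ contributions to $p(s_n)$ and $\RE\, p(\tilde t_n)$ cancel exactly, leaving
$$n\bigl[p(s_n) - \RE\, p(\tilde t_n)\bigr] \longrightarrow \tfrac12 (\IM z)^2\bigl(p'(t)/t + p''(t)\bigr) = 2(\IM z)^2\, \gamma(t).$$
Hence the prefactor is uniformly bounded in $n$, and multiplying by the $O(n^{-1/2})$ bound on the PMF yields $\max_k |a_{n,k}(z)|^2 = O(n^{-1/2}) \to 0$.

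The main obstacle is the cancellation of the $n^{-1/2}$ terms in $p(s_n) - \RE\, p(\tilde t_n)$: without it, the prefactor would grow exponentially in $n$ and the argument would collapse. That this cancellation occurs, and produces precisely $\gamma(t)$ as the residual coefficient, is the same mechanism that yields the covariance structure of the limiting Gaussian process in Theorem~\ref{theo:wkonvgauss}, providing a useful internal consistency check.
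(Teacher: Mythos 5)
Your decomposition is exactly the paper's: split $|a_{n,k}(z)|^2$ into the ratio $v_n(|t+z/\sqrt n|)/|v_n(t+z/\sqrt n)|$ times the probability mass function of a binomial/Poisson/negative-binomial (or conditioned Poisson) sum, bound the first factor, and show the maximal mass tends to zero. The two executions differ in both steps, though. For the prefactor, the paper does not redo the Taylor expansion: it observes that the ratio equals $\E[Q_{n,t}(z)Q_{n,t}(\bar z)]$ by~\eqref{eq:E_Q_n_t_Q_n_t} and so converges to $\exp(2\gamma(t)(\IM z)^2)$ by the covariance limit~\eqref{eq:covariancestructureeins} already established in Theorem~\ref{theo:wkonvgauss}; your expansion of $p$ around $t$, with the cancellation of the $n^{-1/2}$ terms and the residual $2(\IM z)^2\gamma(t)$, is a correct recomputation of the same fact (I checked the expansion of $|t+z/\sqrt n|$ and the use of evenness of $p$; both are right), just less economical. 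For the mass function, the paper deliberately avoids a local limit theorem and instead applies the Kolmogorov--Rogozin concentration inequality, which gives $\max_k \P{S_n=k}\leq C/\sqrt n$ after only checking that $Q(X_{n,1};1/2)$ stays bounded away from $1$. This sidesteps the one weak point in your argument: the distribution of the summands here has parameter $p_n$ depending on $n$ (a triangular array), so the local CLT cited after~\eqref{eq:asymptoticank} (Petrov, fixed distribution) does not apply verbatim; you would need a version uniform over parameters in a compact set, or your fallback direct Stirling estimate for these explicit distributions, which does work since $p_n$ converges to a nondegenerate limit (and in the \WP\ case one also uses $\P{S_n\leq n}\to 1$, which you should state). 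So your proof is essentially correct, but to make it airtight replace the citation of~\eqref{eq:asymptoticank} by the Stirling estimate (or by the concentration-function argument), and note that the $O(n^{-1/2})$ rate is stronger than what the lemma needs.
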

\begin{proof}
Let $s_n = t + \frac{z}{\sqrt n}$. Then,
\begin{equation}\label{eq:a_n_k_z_est}
|a_{n,k}(z)|^2 = \frac{f_{n,k}^2 |s_n|^{2k}}{\sqrt{v_n(s_n)v_n(\bar s_n)}}
=
\frac{f_{n,k}^2 |s_n|^{2k}}{v_n(|s_n|)} \cdot \frac{v_n(|s_n|)}{\sqrt{v_n(s_n)v_n(\bar s_n)}}
\leq
C
\frac{f_{n,k}^2 |s_n|^{2k}}{v_n(|s_n|)}
\end{equation}
because by~\eqref{eq:E_Q_n_t_Q_n_t} and \eqref{eq:covariancestructureeins} (where we take $z_i=z$, $z_j=\bar z$),
$$
\frac{v_n(|s_n|)}{\sqrt{v_n(s_n)v_n(\bar s_n)}} = \E [Q_{n,t}(z) Q_{n,t}(\bar z)]  \ton \EXP{2 \gamma(t)(\IM z)^2}.
$$
In the following we shall use a probabilistic interpretation of the right-hand side of~\eqref{eq:a_n_k_z_est}. For each $n\in\N$ let $X_{n,1},\ldots, X_{n,n}$ be i.i.d.\ random variables with
\begin{equation*}
X_{n,1}\sim \begin{cases}
\BINOMIALVERT{1}{p_n} & \textrm{in the \SP\ case},\\
\POI{p_n} & \textrm{in the \FAF\ or \WP\ case},\\
\NBINOMIALVERT{1}{p_n} & \textrm{in the \HAF\ case},
\end{cases}
\end{equation*}
where
$$
p_n =
\begin{cases}
|s_n|^2/(1+|s_n|^2) & \textrm{in the \SP\ case},\\
|s_n|^2 & \textrm{in the \FAF\ or \WP\ case},\\
1-|s_n|^2 & \textrm{in the \HAF\ case}.
\end{cases}
$$
Define their sum $S_n = X_{n,1}+\ldots+X_{n,n}$. Then, recalling the definition of $f_{n,k}$ in~\eqref{eq:thefourcases} and using the convolution property of the binomial, Poisson and negative binomial distributions, we can write
$$
\frac{f_{n,k}^2 |s_n|^{2k}}{v_n(|s_n|)}
=
\begin{cases}
\P{S_{n}=k} & \textrm{in the \SP, \HAF\ and \FAF\ cases},\\
\BP{S_{n}=k}{S_{n}\leq n} & \textrm{in the \WP\ case},
\end{cases}
$$
for all $k\in\N_0$. The computations are analogous to those in the proof of Lemma~\ref{lemma:abschaetzungdmeins}; see~\eqref{eq:a_n_k_binomial}, \eqref{eq:a_n_k_neg_binomial}, \eqref{eq:a_n_k_poi}.
Note that in the \WP\ case, $\lim_{n\to\infty} p_n = t^2 \in (0,1)$, so that $\lim_{n\to\infty} \P{S_n \leq n} = 1$ by the law of large numbers for the Poisson distribution. Therefore, in all four cases it suffices to prove that
\begin{equation}\label{eq:concentration_to_0}
\lim_{n\to\infty} \max_{k=0,1,\ldots} \P{S_n = k}  = 0.
\end{equation}
We shall do this by utilizing the Kolmogorov--Rogozin inequality; see Eq.~(A) on  page~290 of~\cite{Esseen1968}. For a real-valued random variable $Y$ let
\begin{equation}
Q(Y;r):=\sup_{x\in\R} \P{Y\in{[x,x+r]}},\quad r>0,
\end{equation}
denote the concentration function of $Y$. For all three distributions of interest it is easy to check that
\begin{equation}\label{eq:Qwegbeschraenkt}
Q\left(X_{n,1};\frac{1}{2}\right) = \max_{k=0,1,\ldots} \P{X_{n,1} = k} < 1-\eps
\end{equation}
for some $\eps>0$. Indeed, in the  binomial case $X_{n,1}\sim \BINOMIALVERT{1}{p_n}$ we have
\begin{equation*}
Q\l(X_{n,1};\frac{1}{2}\r) = \max  \{p_n,1-p_n\} \ton \max\{p,1-p\} \text{ with } p = \frac{t^2}{1+t^2} \in (0,1).
\end{equation*}
In the negative binomial case $X_{n,1}\sim \NBINOMIALVERT{1}{p_n}$ we have
\begin{equation*}
Q\l(X_{n,1};\frac{1}{2}\r) = p_n \ton 1-t^2 \in (0,1).
\end{equation*}
Finally, in the Poisson case  we have
$$
Q\l(X_{n,1};\frac{1}{2}\r) = \max_{k=0,1,\ldots} \e^{-|s_n|^2} \frac{|s_n|^{2k}}{k!} \ton \max_{k=0,1,\ldots} \e^{-t^2} \frac{t^{2k}}{k!} < 1
$$
since $t\neq 0$.   In all four cases, the Kolmogorov--Rogozin inequality (see Corollary~1 on page~304 of~\cite{Esseen1968}) yields
\begin{equation*}
\max_{k=0,1,\ldots} \P{S_n=k}
=
Q\l(S_n;\frac{1}{2}\r)
\leq C\left(\sum_{k=1}^n \l(1-Q\l(X_{n,k};\frac{1}{2}\r)\r)\right)^{-1/2}
\leq \frac{C'}{\sqrt n},
\end{equation*}
thus proving~\eqref{eq:concentration_to_0} and Lemma~\ref{lemma:locallimittheorem}.
\end{proof}


The following lemma was used in the proof of Lemma~\ref{lemma:manyzeroesareunlikely}.
\begin{lemma}\label{lem:est_det_poly}
Let the $f_{n,k}$'s (and the corresponding set $\mathcal{D}$) be chosen as in~\eqref{eq:thefourcases} (respectively, \eqref{eq:p_corresponding_function}).
Fix $R\in\mathcal{D}\cap \R, R>0$.
Then there exist constants $C>0$ and $A>R, A\in\mathcal{D}\cap\R$, such that
\begin{equation*}
\sum_{j=k}^\infty \binom{j}{k} f_{n,j} R^{j} \leq C \left(\frac{2}{\log(A/R)}\right)^{k} \EXP{\frac{n}{2}p\left(\sqrt{AR}\right)} \quad
\textrm{for all }n\in\N, \;\; k\in\N_0,
\end{equation*}
where $p:\mathcal{D}\to\CN$ is given by \eqref{eq:p_corresponding_function}.
\end{lemma}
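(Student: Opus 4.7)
The plan is to apply the Cauchy--Schwarz inequality after introducing an auxiliary parameter $A\in\mathcal{D}\cap\R$ with $A>R$, chosen so that also $\sqrt{AR}\in\mathcal{D}$; such a choice will exist in every case (any $A>R$ works in the SP and FAF cases since $\mathcal{D}\cap\R=\R$, and any $A\in(R,1)$ works in the HAF and WP cases, where $\mathcal{D}\cap\R\supset(-1,1)$). The splitting
\begin{equation*}
\binom{j}{k}f_{n,j}R^j = \left[f_{n,j}(AR)^{j/2}\right]\cdot\left[\binom{j}{k}(R/A)^{j/2}\right]
\end{equation*}
combined with Cauchy--Schwarz in $\ell^2(\N_0)$ will give
\begin{equation*}
\left(\sum_{j=k}^\infty\binom{j}{k}f_{n,j}R^j\right)^{\!2}\leq v_n(\sqrt{AR})\cdot S_k,\qquad S_k:=\sum_{j=k}^\infty\binom{j}{k}^2(R/A)^j.
\end{equation*}
By assumption~\eqref{eq:annahmevarmodphi} the first factor is bounded by $C\e^{np(\sqrt{AR})}$ uniformly in $n\in\N$ (the finitely many small-$n$ values are absorbed into the constant by continuity of $v_n/\e^{np}$).

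The combinatorial heart of the argument will be a sharp estimate of $S_k$. My plan is to use Vandermonde's identity in the form
\begin{equation*}
\binom{2(k+l)}{2k}=\sum_{m=0}^{2k}\binom{k+l}{m}\binom{k+l}{2k-m}\geq\binom{k+l}{k}^2,
\end{equation*}
together with the classical generating function $\sum_{m\geq 0}\binom{m}{2k}z^m=z^{2k}/(1-z)^{2k+1}$. Retaining the even-indexed terms via $\tfrac12(g(z)+g(-z))$ and substituting $x=z^2$ will give
\begin{equation*}
\sum_{l\geq 0}\binom{2(k+l)}{2k}x^l\leq\frac{1}{(1-\sqrt{x})^{2k+1}},
\end{equation*}
and hence $S_k\leq(R/A)^k/(1-\sqrt{R/A})^{2k+1}$.

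Combining both estimates, extracting a square root, and simplifying via the algebraic identity $\sqrt{R/A}/(1-\sqrt{R/A})=1/(\sqrt{A/R}-1)$ will yield
\begin{equation*}
\sum_{j=k}^\infty\binom{j}{k}f_{n,j}R^j\leq C'\bigl(\sqrt{A/R}-1\bigr)^{-k}\e^{np(\sqrt{AR})/2}.
\end{equation*}
The concluding step will apply the elementary inequality $x-1>\log x$ for $x>1$ at $x=\sqrt{A/R}$, which gives $(\sqrt{A/R}-1)^{-k}\leq(2/\log(A/R))^k$. The main obstacle is the tight estimate of $S_k$: a crude bound such as $\binom{k+l}{k}\leq 2^{k+l}$ only produces convergence for $A>4R$, which is unavailable when $R$ lies near the boundary of $\mathcal{D}$ (e.g.\ near $1$ in the HAF or WP case). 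The Vandermonde--plus--generating-function trick is precisely what permits $A$ to be chosen arbitrarily close to $R$, so that the argument works on the entire range of $R\in\mathcal{D}\cap\R$ needed in the proof of Lemma~\ref{lemma:manyzeroesareunlikely}.
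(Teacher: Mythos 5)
Your argument is correct, and its skeleton coincides with the paper's: the same Cauchy--Schwarz splitting $\binom{j}{k}f_{n,j}R^j=[f_{n,j}(AR)^{j/2}]\cdot[\binom{j}{k}(R/A)^{j/2}]$, and the same treatment of the first factor via $\sum_{j\geq k}f_{n,j}^2(AR)^j\leq v_n(\sqrt{AR})\leq C\e^{np(\sqrt{AR})}$ using~\eqref{eq:annahmevarmodphi} at the point $\sqrt{AR}\in\mathcal{D}$. Where you genuinely diverge is the combinatorial factor $S_k=\sum_{j\geq k}\binom{j}{k}^2(R/A)^j$: the paper bounds $\binom{j}{k}\leq j^k/k!$, splits the resulting series at the maximizer $x_k=2k/\log(A/R)$ of $x^{2k}(R/A)^x$, compares the tail with a Gamma integral and invokes Stirling, directly producing the factor $(2/\log(A/R))^{2k}$; you instead use Vandermonde, $\binom{j}{k}^2\leq\binom{2j}{2k}$, together with the closed-form generating function $\sum_m\binom{m}{2k}z^m=z^{2k}/(1-z)^{2k+1}$ (keeping the even part), which gives the slightly sharper bound $S_k\leq (R/A)^k(1-\sqrt{R/A})^{-(2k+1)}$, and only at the end convert $(\sqrt{A/R}-1)^{-k}\leq(2/\log(A/R))^k$ via $x-1>\log x$ to match the stated form. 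Both routes work for every admissible $R$ (in particular near the boundary in the HAF and WP cases, where $A$ must stay below $1$); yours avoids Stirling and the splitting at the maximizer, at the cost of the Vandermonde/generating-function trick, while the paper's restriction $R<A<\e^2R$ is only needed to make its own splitting argument ($x_k\geq k$) go through and is not required in your version. Two harmless inaccuracies: in the SP case $\mathcal{D}\cap\R=\R\setminus\{0\}$ rather than $\R$ (irrelevant since $A>R>0$), and the uniformity in $n$ of $v_n(\sqrt{AR})\leq C\e^{np(\sqrt{AR})}$ follows simply from the convergence in~\eqref{eq:annahmevarmodphi} plus finiteness of $v_n(\sqrt{AR})$ for each fixed $n$, no continuity argument needed.
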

\begin{proof}
Choose some $A\in \mathcal{D}\cap\R$ such that  $R < A < \e^2 R$. The Cauchy-Schwarz inequality in the $\ell^2$-space yields
\begin{equation}\label{eq:est_cauchyschwarz_lzwei}
\sum_{j=k}^\infty \binom{j}{k} f_{n,j} R^{j}
\leq \sqrt{\sum_{j=k}^\infty f_{n,j}^2 (AR)^j} \sqrt{\sum_{j=k}^\infty \binom{j}{k}^2 \left(\frac{R}{A}\right)^j}.
\end{equation}

\vspace*{2mm}
\noindent
\textit{First factor on the right-hand side of \eqref{eq:est_cauchyschwarz_lzwei}.}
The assumption on $v_n$ stated in~\eqref{eq:annahmevarmodphi} yields the estimate
\begin{equation}\label{eq:est_firstfactor}
\sqrt{\sum_{j=k}^\infty f_{n,j}^2 (AR)^j} \leq  \sqrt{v_n(\sqrt{AR})} \leq C \e^{np(\sqrt{AR})/2},
\end{equation}
where the function $p:\mathcal{D}\to\R$ is given by \eqref{eq:p_corresponding_function}.

\vspace*{2mm}
\noindent
\textit{Second factor on the right-hand side of \eqref{eq:est_cauchyschwarz_lzwei}.}
The function $x\mapsto x^{2k}(R/A)^x$ attains its maximum at
\begin{equation*}
\argmax_{x\in [0,\infty)} x^{2k}\left(\frac{R}{A}\right)^x =\frac{2k}{\log(A/R)}=:x_k \geq k.
\end{equation*}
Using the inequality $\binom{n}{k}\leq n^k/k!$ for all $n\geq k$ and splitting the sum at this maximum yields
\begin{align}\label{eq:series_k}
\begin{split}
\sum_{j=k}^\infty \binom{j}{k}^2\left(\frac{R}{A}\right)^j
&\leq \frac{1}{(k!)^2}\sum_{j=k}^\infty j^{2k}\left(\frac{R}{A}\right)^j \\
&= \frac{1}{(k!)^2}\left(\sum_{j=k}^{\lceil x_k\rceil} j^{2k} \left(\frac{R}{A}\right)^j
+\sum_{j=\lceil x_k\rceil+1}^{\infty} j^{2k} \left(\frac{R}{A}\right)^j \right).
\end{split}
\end{align}
In the first sum in \eqref{eq:series_k} we estimate every term by the maximum and then use the Stirling formula:
\begin{align}\label{eq:est_poly_sumbymax}
\frac{1}{(k!)^2}\sum_{j=k}^{\lceil x_k \rceil} j^{2k} \left(\frac{R}{A}\right)^j
\leq \lceil x_k \rceil \frac{x_k^{2k}}{(k!)^2} \left(\frac{R}{A}\right)^{x_k}
\leq \left(\frac{2}{\log(A/R)}\right)^{2k+1}\frac{k^{2k+1}}{(k!)^2}\e^{-2k}
\leq C\left(\frac{2}{\log(A/R)}\right)^{2k}.
\end{align}
The sequence $(j^{2k}(R/A)^j)_{j\geq \lceil x_k\rceil}$ is monotone decreasing and thus the second
term of the sum in \eqref{eq:series_k} can be estimated by the corresponding integral:
\begin{align}\label{eq:est_poly_sumbyint}
\begin{split}
\frac{1}{(k!)^2}\sum_{j=\lceil x_k \rceil+1}^\infty j^{2k} \left(\frac{R}{A}\right)^j
&\leq \frac{1}{(k!)^2}\INT{x_k}{\infty} x^{2k} \left(\frac{R}{A}\right)^x \D x
=\frac{1}{(k!)^2}\INT{x_k}{\infty} x^{2k} \EXP{x\log\left(\frac{R}{A}\right)} \D x \\
&=\left(\log\left(\frac{A}{R}\right)\right)^{-(2k+1)}\frac{1}{(k!)^2}\INT{2k}{\infty} u^{2k}\e^{-u}\D u \\
&\leq \left(\log\left(\frac{A}{R}\right)\right)^{-(2k+1)}\frac{\Gamma(2k+1)}{(k!)^2}
\leq C \left(\frac{2}{\log\left(A/R\right)}\right)^{2k}.
\end{split}
\end{align}
Combining the results of \eqref{eq:est_poly_sumbymax} and \eqref{eq:est_poly_sumbyint} with \eqref{eq:series_k} we arrive at
\begin{equation}\label{eq:est_secondfactor}
\sqrt{\sum_{j=k}^\infty \binom{j}{k}^2 \left(\frac{R}{A}\right)^j} \leq C \left(\frac{2}{\log(A/R)}\right)^k.
\end{equation}
Taking~\eqref{eq:est_cauchyschwarz_lzwei}, \eqref{eq:est_firstfactor} and~\eqref{eq:est_secondfactor} together completes the proof.
\end{proof}

\subsection*{Acknowledgement}
The support by the SFB 878 ``Groups, Geometry and Actions'' is gratefully acknowledged. 

\bibliographystyle{plainnat}
\bibliography{weyl_poly_bib}

\end{document}